\documentclass[reqno, 12pt]{amsart}

\usepackage{_pascal/_pascal}

\hypersetup{
    pdftitle={Tree-independence number and forbidden induced subgraphs: excluding a 6-vertex path and a (2,t)-biclique},
    pdfauthor={Maria Chudnovsky, Julien Codsi, J. Pascal Gollin, Martin Milanič, Varun Sivashankar}
}

\newcommand{\tw}[0]{\ensuremath{\mathsf{tw}}}

\newcommand{\inducedsubgraph}[0]{\ensuremath{\subseteq_{\mathsf{i}}}}

\newcounter{tbox}

\makeatletter
\newcommand{\leqnomode}{\tagsleft@true}
\newcommand{\reqnomode}{\tagsleft@false}
\makeatother

\begin{document}

% \linenumbers

\author[Chudnovsky]{Maria Chudnovsky$^\ast$} 
\address[$\ast$]{Department of Mathematics, Princeton University, Princeton, NJ, USA. Supported by NSF Grants DMS-2348219 and CCF-2505100,  AFOSR grant FA9550-22-1-0083, and a Guggenheim Fellowship.
}
\email{\tt mchudnov@math.princeton.edu}

\author[Codsi]{Julien Codsi$^{\mathsection}$}
\email{\tt jc3530@princeton.edu}
\address[$\mathsection$]{
Department of Mathematics, Princeton University, Princeton, NJ, USA. Supported by NSF Grants DMS-2348219 and  CCF-2505100,
AFOSR grant FA9550-22-1-0083, and Fonds de recherche du Qu\'{e}bec Grant 321124.}
\author[Gollin]{J.~Pascal Gollin$^\dagger$}
\address[$\dagger$]{FAMNIT, University of Primorska, Koper, Slovenia. %
Supported in part by the Slovenian Research and Innovation Agency (I0-0035, research programs P1-0285 and P1-0383, and research projects J1-3003, J1-4008, J1-4084, J1-60012, and N1-0370) and by the research program CogniCom (0013103) at the University of Primorska.}
\email{\tt pascal.gollin@famnit.upr.si}

\author[Milani\v{c}]{Martin Milani\v{c}$^{\dagger,\ddagger}$}
\address[$\ddagger$]{IAM, University of Primorska, Koper, Slovenia}
\email{\tt martin.milanic@upr.si}

\author[Sivashankar]{Varun Sivashankar$^\parallel$}
\address[$\parallel$]{Department of Mathematics, Princeton University, Princeton, NJ, USA}
\email{\tt varunsiva@princeton.edu}

\title[Tree-independence number of $\{P_6,K_{2,t}\}$-free graphs]{Tree-independence number\\ and forbidden induced subgraphs:\\ excluding a $6$-vertex path and a~$(2,t)$-biclique}

% \date{\today}
\date{April 2, 2026}

\keywords{tree-independence number, forbidden induced subgraph, balanced separator}

\subjclass[2020]{05C75, 05C40, 05C05}

\begin{abstract}
We show that for every positive integer~${t \geq 2}$ there exists an integer~$s$ such that every graph that contains no induced subgraph isomorphic to either the $6$-vertex path or the $(2,t)$-biclique, the complete bipartite graph~$K_{2,t}$, has tree-independence number at most~$s$. 
This result makes partial progress on a conjecture of Dallard, Krnc, Kwon, Milani\v{c}, Munaro, \v{S}torgel, and Wiederrecht. 
\end{abstract}

\maketitle

\section{Introduction}
\label{sec:intro}

Treewidth and related graph width parameters are of central importance for designing polynomial-time algorithms on restricted inputs for various \textsf{NP}-hard problems.
The celebrated Courcelle's theorem~\cite{Courcelle:1990:MSO1}, for instance, yields linear-time algorithm for any decision problem expressible in \textsf{MSO}$_2$ logic when the input graph is restricted to graphs of bounded treewidth, and similar results are known for optimization problems~\cite{ArnborgLS:1991:EasyTW}. 
In view of the fact that classes of bounded treewidth are necessarily sparse, several more general width parameters were introduced in the literature that can also be bounded on dense graph classes and for which algorithmic metatheorems capturing more restricted sets of problems than Courcelle's theorem hold.
This includes, among others, clique-width~\cite{CourcelleMR:2000}, mim-width~\cite{BergougnouxDJ:2023}, tree-independence number~\cite{LimaMMORS:2024}, and induced matching treewidth~\cite{BodlaenderFK:2026}.

The latter two of these parameters are defined similarly as treewidth, using the concept of tree decompositions of graphs, but with a different measure of the quality of the tree decomposition.
In the case of tree-independence number, instead of minimizing the maximum number of vertices in a bag of a tree decomposition, the goal is to minimize the maximum number of pairwise non-adjacent vertices in a bag.
The more general parameter induced matching treewidth is defined similarly, but with respect to induced matchings consisting only of edges intersecting a fixed bag. 

Both parameters admit algorithmic metatheorems.
As shown by Lima et al.~\cite{LimaMMORS:2024}, in any vertex-weighted graph with bounded tree-independence number, one can find in polynomial time a maximum weight induced subgraph with bounded treewidth and satisfying some \textsf{CMSO}$_2$ property.
For induced matching treewidth, a similar result was recently obtained by Bodlaender, Fomin, and Korhonen~\cite{BodlaenderFK:2026}.

Another way treewidth can be generalized is by requiring its boundedness only for subclasses of the considered class.
Graphs with large clique number necessarily have large treewidth.
A graph class is \emph{hereditary} if it is closed under induced subgraphs.
A hereditary graph class is said to be \emph{$(\tw,\omega)$-bounded} if the absence of large cliques is not only a necessary but also a sufficient condition for bounded treewidth.
Such graph classes have some good algorithmic properties for variants of the \textsc{Clique} and \textsc{Coloring} problems (see~\cite{ChaplickTVZ:2021,DallardMS:2024:TWvsOmega1}).
Furthermore, since $(\tw,\omega)$-boundedness is a necessary condition for bounded tree-independence number (see~\cite{DallardMS:2024:TWvsOmega2}), the aforementioned algorithmic metatheorem due to Lima et al.~remains valid if the condition on bounded treewidth is replaced with bounded clique number.

The full potential of the algorithmic usefulness of $(\tw,\omega)$-bounded classes is not yet understood.
This motivated Dallard et al.~\cite{DallardMS:2024:TWvsOmega3} to conjecture that for hereditary graph classes, $(\tw,\omega)$-boundedness is equivalent to bounded tree-independence number.
While the conjecture was recently disproved by Chudnovsky and Trotignon (see~\cite{ChudnovskyTrotignon:2025}), it is still open for classes defined by finitely many forbidden induced subgraphs, even in the following special case.
Given a family $\mathcal{F}$ of graphs, we say that a graph $G$ is \emph{$\mathcal{F}$-free} if no induced subgraph of~$G$ is isomorphic to a graph in $\mathcal{F}$.
We denote by $P_r$ the path with $r$ vertices and by $K_{s,t}$ the complete bipartite graph with parts of size $s$ and $t$, respectively.

\begin{conjecture}[Dallard et al.~\cite{DallardKKMMSW:2024:TWvsOmega4}]
    \label{conj:PrKtt}
    For any two positive integers~$r$ and~$t$, the class of~${\{P_r, K_{t,t}\}}$-free graphs has bounded tree-independence number.
\end{conjecture}

A weakening of this conjecture, establishing a polylogarithmic bound on the tree-independence number, follows from a result of Chudnovsky et al.~\cite[Theorem 1.2]{ChudnovskyCLMS:2025:TIN5}.
Dallard et al.~\cite{DallardKKMMSW:2024:TWvsOmega4} confirmed the validity of \zcref{conj:PrKtt} for ${r \le 4}$. 
Furthermore, they observed that for fixed~$r$ and~$t$, tree-independence number is bounded in any class of ${\{P_r,K_{1,t}\}}$\nobreakdash-free graphs (see also~\cite{BesterStorgelDLMZ:2025:awesome}) and suggested the following as an interesting open case.

\begin{conjecture}
    \label{conj:PrK2t}
    For any two positive integers~$r$ and~$t$, the class of ${\{P_r, K_{2,t}\}}$-free graphs has bounded tree-independence number.
\end{conjecture}

In this paper, we confirm the validity of this conjecture for~${r \le 6}$ and arbitrary~$t$. 
Prior to this work, the conjecture was open even for the case~${r = 5}$ and~${t \ge 3}$, as well as for the case~${r = 6}$ and~${t \ge 2}$; we refer to~\cite{HilaireMV:2025:TWvsOmega5} for more details and partial results. 

\begin{restatable}{theorem}{mainthm}
    \label{thm:main}
    For every integer~${t \geq 2}$ there exists an integer~$s$ such that every ${\{P_6,K_{2,t}\}}$\nobreakdash-free graph has tree-independence number at most~$s$. 
\end{restatable}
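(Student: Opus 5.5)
We will use the standard reduction from bounded tree-independence number to balanced separators. It suffices to show that there is a constant $c = c(t)$ with the following property. For every $\{P_6,K_{2,t}\}$-free graph $G$ and every weight function $w\colon V(G)\to[0,1]$ with total weight $1$, there is a set $X \subseteq V(G)$ such that:
\begin{itemize}
    \item $X$ is contained in the union of the closed neighbourhoods of at most $c$ vertices;
    \item every component of $G - X$ has weight at most $1/2$.
\end{itemize}
By the known theorem of Chudnovsky et al.\ relating such ``domination-bounded'' balanced separators to tree-independence number, this suffices. Closed neighbourhoods of bounded independence number are then what we need. Note that in a $K_{2,t}$-free graph the common neighbourhood of two non-adjacent vertices has independence number less than $t$. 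Also, $\alpha(N(v))$ is not bounded in general, but Ramsey-type arguments plus $P_6$-freeness should control the relevant parts. So the target separator will be built from a bounded number of neighbourhoods, intersected with suitable structures, each of bounded independence number.

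\textbf{Step 1: a dominating structure.} Assume no balanced separator of the required form exists, and let $D$ be a heavy component in the usual sense. Since $G$ is $P_6$-free, every connected induced subgraph has a dominating induced $P_4$ or a dominating induced complete bipartite graph (van 't Hof–Paulusma type results for $P_6$-free graphs). If the dominating structure is a path $P$ on at most $4$ vertices, then $N[P]$ is a candidate separator. The problem is that $\alpha(N(v))$ may be large. So we instead remove $N[P]$ in stages. We partition $N(P)$ according to the attachments to $P$. Inside each class, vertices with two non-adjacent neighbours on $P$ form a set of independence number below $t$. Vertices attached to a single vertex or to an edge of $P$ are handled by iterating on $G[N(v)]$.

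\textbf{Step 2: a central bag method.} We use the now-standard technique from the $P_5$/$P_6$ tree-independence papers: take a maximal collection of ``star-like'' separations coming from pairs of vertices, and form the central bag $\beta$. We then show:
\begin{itemize}
    \item $\beta$ inherits a bounded-size dominating set of bounded ``independent width'', which uses $P_6$-freeness again within $\beta$;
    \item the weight that gets pushed into $\beta$ remains unbalanced.
\end{itemize}
This contradicts the assumption, because in $\beta$ the dominating set's neighbourhood becomes a balanced separator of bounded independence number. The $K_{2,t}$-freeness is used to show that the relevant separations $N[v]\cap N[u]$ intersected with components have independence number less than $t$. It is also used to show that, for a vertex $v$ and a component $C$ of $G-N[v]$, the set $N(C)\cap N(v)$ is well structured. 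Here $P_6$-freeness forces that every vertex of $N(C)$ is complete or anticomplete to large parts of $C$, giving $K_{2,t}$ unless the set is small.

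\textbf{Main obstacle.} The hardest part is showing that vertices of the separator with large independent neighbourhoods can be covered. Concretely, we must show that for a dominating vertex $v$, the set $N(v)$ can be replaced by a bounded number of sets of bounded $\alpha$ without losing balance. The approach we would try first is as follows. We apply the induction/iteration on the neighbourhood graph $G[N(v)]$, which is $\{P_6,K_{2,t}\}$-free and, crucially, within $N(v)$ the graph is $K_{1,t}$-free relative to common neighbours. We combine this with the bounded result for $\{P_r,K_{1,t}\}$-free graphs cited in the introduction, applied to the auxiliary graphs where each vertex has small independent common neighbourhoods with $v$. Gluing the resulting decompositions along the central bag is where we expect the technical case analysis, driven by the $P_6$ constraint, to concentrate.
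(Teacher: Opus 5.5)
\textbf{Verdict: there is a genuine gap.} It sits exactly at what you call the main obstacle, and the reduction in your first paragraph is false as stated.

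Balanced separators covered by boundedly many closed neighbourhoods do not bound tree-independence number. In $K_{n,n}$, two adjacent vertices dominate the whole graph, so $X=V(G)$ works with $c=2$ for every weighting, and the same holds in every induced subgraph. Yet $K_{n,n}$ has tree-independence number $n$, so no theorem of that form can hold.

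What the paper uses is \zcref{lem:balanced-separator-small-alpha}. It needs a second hypothesis your plan never touches: any two nonadjacent vertices $u,v$ are separated by a set $A_{u,v}$ of bounded independence number. Given this, separators of the form $N[v]\cup B$ with $\alpha(B)$ bounded (in every connected induced subgraph) are upgraded without ever decomposing $N(v)$. Let $T$ be the set of vertices admitting such a separator.
\begin{itemize}
\item If $v_1,v_2\in T$ are nonadjacent, a heavy component of $G-(A_{v_1,v_2}\cup\{v_1,v_2\}\cup B_{v_1}\cup B_{v_2})$ must meet both $N(v_1)$ and $N(v_2)$. It would then join $v_1$ to $v_2$ outside $A_{v_1,v_2}$, a contradiction.
\item If $T$ is a clique, apply the hypothesis inside the heaviest component of $G-T$ and add $T$ to the resulting set. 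This gives a vertex outside $T$ that would have to lie in $T$.
\end{itemize}
So the substance of the proof is (i) the pairwise separators of \zcref{lem:small-alpha-separator} and (ii) the separators $N[z_0]\cup Z$ of \zcref{lem:balanced-separator-closed-neighborhood}. Your sketch establishes neither.

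\textbf{Your proposed remedy fails.} You want to tame $N(v)$ via a $K_{1,t}$-freeness property, but $K_{2,t}$-freeness bounds $\alpha(N(u)\cap N(v))$ only for \emph{nonadjacent} $u,v$. Take $K_{1,1,n}$, an edge $uv$ joined to $n$ independent vertices. It is $\{P_6,K_{2,2}\}$-free, yet $G[N(v)]\cong K_{1,n}$ and $u$ has $n$ independent common neighbours with $v$. So there is no $K_{1,t}$-free graph to which the $\{P_r,K_{1,t}\}$ result could be applied.

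Steps~1 and~2 are not yet arguments. The complete bipartite outcome of the domination theorem for $P_6$-free graphs is in general neither induced nor of bounded size, and Step~1 does not handle it. The central-bag bullets are only asserted.

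\textbf{The missing idea is the pyramid.}
\begin{itemize}
\item For (i): if the neighbours of $a$ on induced $(a,b)$-paths contain a large independent set, $P_6$- and $K_{2,t}$-freeness yield a pyramid with apex $a$ for which $b$ is basic. Deleting the vertices whose neighbourhood in the pyramid is not a clique costs independence number at most $12(t-1)$, one common neighbourhood per non-edge. This makes the pyramid simplicial. Then $P_6$-freeness limits the relevant induced $(a,b)$-paths to length at most~$3$, which gives an $(a,b)$-separator with $\alpha\le 14(t-1)+3$.
\item For (ii): a minimal separator of large independence number gives a pyramid via the connectifier lemma; if there is none, \zcref{cor:minimalseparatorboundedtreealpha} already suffices. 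Pick a pyramid whose apex neighbourhood has maximum independence number and delete the corresponding set $Z$. Then $P_6$-freeness shows that some separator $N[z]\cup Z'$ with $\alpha(Z')$ bounded is balanced. Otherwise a vertex of $Z$ adjacent to the heavy component of $G-Z$ would be the apex of a pyramid whose apex neighbourhood has larger independence number.
\end{itemize}
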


This result adds to the growing literature of results on bounding the tree-independence number in hereditary graph classes; see, for example, \cite{AbrishamiACHSV:TIN1,ChudnovskiHLS2026:TIN2,ChudnovskiHT2024:TIN3, ChudnovskyGHLS:2025:TIN4,ChudnovskyCLMS:2025:TIN5,ChudnovskyCLMS:2025:TIN6}.

Determining whether tree-independence number is bounded is an interesting question also when restricted to hereditary graph classes that are closed under \textsl{induced minors}.\footnote{A graph $H$ is an \emph{induced minor} of a graph $G$ if no graph isomorphic to $H$ can be obtained from an induced subgraph of $G$ by edge contractions (removing any resulting loops and fusing parallel edges).}               
In this regard, Dallard et al.~\cite{DallardMS:2024:TWvsOmega3} showed that for every positive integer $t$, excluding $K_{2,t}$ as an induced minor results in a class of bounded tree-independence number; and the same holds for the graph $K_5$ minus an edge and the graph obtained from a $4$-cycle by adding a universal vertex.
Furthermore, it follows from the main result of Chudnovsky et al.~\cite{ChudnovskyHS:2026:tw16} that excluding a path and a complete bipartite graph as an induced subgraph implies excluding the same path and a sufficiently large complete bipartite graph as an induced minor.
Hence, \zcref{conj:PrKtt} holds if (and only if) for every $r$ and~$t$, the class of graphs excluding $P_r$ and~$K_{t,t}$ as an induced minor has bounded tree-independence number.

Let us also mention a conjecture of Dallard et al.~\cite{DallardKKMMSW:2024:TWvsOmega4} generalizing \zcref{conj:PrKtt}, which states that excluding a star as an induced subgraph and a planar graph as an induced minor results in a graph class with bounded tree-independence number.
A weakening of this conjecture, establishing a polylogarithmic bound on the tree-independence number, was recently proved by Chudnovsky et al.~\cite{ChudnovskiCPR2026:TIN7}.

Our work leaves open the case $r\ge 7$ and $t \ge 2$ of \zcref{conj:PrKtt,conj:PrK2t}, as well as the case $r\ge 5$ and $t \ge 3$ of \zcref{conj:PrKtt}. 
In particular, the smallest open cases involve determining (un)boundedness of tree-independence number in the classes of $\{P_7,K_{2,2}\}$-free graphs and $\{P_5,K_{3,3}\}$-free graphs. 

\paragraph{Structure of the paper and overview of our techniques}
After introducing some necessary notation and tools in \zcref{sec:prelims}, we discuss the proof of the main result in \zcref{sec:main}. 
Our approach is based on a result from~\cite{ChudnovskiHLS2026:TIN2} (see \zcref{lem:small-tree-alpha}), which, roughly speaking, reduces the problem of bounding the tree-independence number in a certain class of graphs to showing the existence of so-called \textsl{balanced separators} with bounded independence number.
To this end, we prove in \zcref{lem:balanced-separator-small-alpha} a general sufficient condition for the existence of such separators.
This condition consists of two parts.
That each of the two parts is satisfied for any $\{P_6,K_{2,t}\}$-free graph is the statement of \zcref{lem:small-alpha-separator,lem:balanced-separator-closed-neighborhood}, respectively.
These two lemmas are then applied to prove \zcref{thm:main}. 
Finally, in \zcref{sec:pyramids} we prove \zcref{lem:small-alpha-separator,lem:balanced-separator-closed-neighborhood} by investigating the structure of so-called \textsl{pyramids} in~$\{P_6,K_{2,t}\}$-free graphs.

\section{Preliminaries}
\label{sec:prelims}

We take any standard notions and notation not explained here from~\cite{Diestel:GT5}. 

\paragraph{Basic notions and notation} 
Given an integer~$k$, we write~${[k] \coloneqq \{ i \in \mathbb{Z} \colon 1 \leq i \leq k\}}$ for the set of all positive integers less than or equal to~$k$. 
Given real numbers~$a$ and~$b$, we write~${[a,b) \coloneqq \{ x \in \mathbb{R} \colon a \leq x < b \}}$ for the half-open interval between~$a$ and~$b$. 

All graphs in this paper are finite and simple. 
Given a graph~$G$, we denote its vertex set by~$V(G)$ and its edge set by~$E(G)$. 
If a graph~$H$ is an induced subgraph of a graph~$G$, we write~${H \inducedsubgraph G}$. 
For convenience, slightly abusing the notation, we sometimes identify vertex subsets and the corresponding induced subgraphs. 
For a graph~$G$ and a set~${A \subseteq V(G)}$, we write~$G[A]$ for the induced subgraph of~$G$ with vertex set~$A$. 
Recall that for a family~$\mathcal{F}$ of graphs we say that a graph~$G$ is \emph{$\mathcal{F}$-free} if no induced subgraph of~$G$ is isomorphic to a graph in~$\mathcal{F}$. 
If~$\mathcal{F} = \{ H \}$ is a singleton, we also say that~$G$ is \emph{$H$-free}. 

For a vertex~${v \in V(G)}$, we denote by~$N_G(v)$ the \emph{(open) neighbourhood} of~$v$, that is, the set of vertices in~$G$ adjacent to~$v$, and by~$N_G[v]$ the \emph{closed neighbourhood} of~$v$, that is, the set~${N_G(v) \cup \{v\}}$. 
Similarly, for a set~${A \subseteq V(G)}$, 
the \emph{(open) neighbourhood} of~$A$ is the set~${N_G(A) \coloneqq \left(\bigcup_{v \in A} N_G(v) \right) \setminus A}$, 
and the \emph{closed neighbourhood} of~$A$ is the set~${N_G[A] \coloneqq \bigcup_{v \in A} N_G[v]}$. 
For all these notions, we drop the subscript if the ambient graph is clear from context. 
We say a vertex~${v \in V(G)}$ is \emph{simplicial} (in~$G$) if its neighbourhood is a clique. 

Given a graph~$G$ and a set~${A \subseteq V(G)}$, we denote the graph~${G[V(G) \setminus A]}$ obtained by deleting all vertices in~$A$ by~${G-A}$. 
When~${A = \{a\}}$ is a singleton, we may instead also write~${G-a}$. 

For a graph~$G$, we denote by~$\alpha(G)$ its \emph{independence number}, that is, the maximum size of an independent set in~$G$. 

\paragraph{Paths and separators} 
We denote a path as a string of its vertices, that is, if we have a set~$\{v_1, \dots, v_k\}$ of~$k$ distinct vertices, then ${v_1 v_2 \dots v_k}$ is the path with vertex set~$\{v_1, \dots, v_k\}$ and edge set~$\{ v_i v_{i+1} \colon i \in [k-1]\}$, where~$k$ is a positive integer. 
The \emph{length} of a path is the number of its edges. 
If~$P$ and~$Q$ are paths that share exactly one of their endpoints, we write the string~${P Q}$ to denote the \emph{concatenation} of~$P$ and~$Q$, that is, the unique path on~$V(P) \cup V(Q)$ that has both~$P$ and~$Q$ as induced subgraphs. 

An \emph{$(a,b)$-path} is a path whose set of endpoints is equal to~$\{a,b\}$. 
Given a graph~$G$ and sets~${A,B \subseteq V(G)}$ of vertices, an \emph{$(A,B)$-path} is an $(a,b)${\dash}path in~$G$ with~${a \in A}$, ${b \in B}$, and no internal vertex in~${A \cup B}$. 

A set~${S \subseteq V(G)}$ \emph{separates} $A$ from~$B$ if it intersects every $(A,B)$-path. 
When~${A = \{v\}}$ for a single vertex~$v$, we also just say that~$S$ separates~$v$ from~$B$. 

We say~$S$ is a \emph{minimal separator} if there are vertices~${u,v \in V(G)}$ such that~$S$ is an inclusion-wise minimal $(u,v)$-separator. 
It is well known that a set $S$ is a minimal separator if and only if there exist two distinct components~$C, C'$ of~${G-S}$ such that~${N_G(C) = N_G(C') = S}$. 
We call such components \emph{$S$-full}. 

\paragraph{The connectifier lemma}
A \emph{star} is a graph isomorphic to~$K_{1,\ell}$ for some positive integer~$\ell$. 
A \emph{comb} is a tree $C$ with maximum degree~$3$ and no vertices of degree $2$ such that there exists a path $P$ in $C$ such that all vertices of degree $3$ in $C$ belong to $P$.
The \emph{line graph} of a graph~$G$ is a graph~$L$ with vertex set~$E(G)$ and edge set~${\{ \{e,f\} \colon e\neq f, e \cap f \neq \emptyset \}}$, that is, $L$ is the \textsl{intersection graph} of~$E(G)$. 
A \emph{subdivision} of a graph~$H$ is a graph~$G$ obtained from~$H$ by replacing each edge~$uv$ of~$H$ with a $(u,v)$-path of length at least~$1$ in a way such that all these replacement paths are internally vertex-disjoint. 
If each replacement path has length exactly~$2$, we say that~$G$ is a \emph{$1$-subdivision of~$H$}. 
If~$G$ is a subdivision of~$H$, we also say~$G$ is a \emph{subdivided~$H$}.

\begin{lemma}[The connectifier lemma, {Chudnovsky et al.~\cite[Theorem 5.2]{ChudnovskyGHLS:2024:tw15}}]
    \label{lem:conntw15}
    For every integer~${h \geq 1}$, there exists an integer ${\mu(h) \geq 1}$ with the following property. 
    Let~$G$ be a connected graph and let~${Y \subseteq G}$ such that~${\abs{Y} \geq \mu(h)}$, ${G - Y}$ is connected, and every vertex of~$Y$ has a neighbor in~${G - Y}$. 
    Then there is a set~${Y' \subseteq Y}$ with~${\abs{Y'} = h}$ and an induced subgraph~$H$ of~${G - Y}$ such that one of the following holds. 
    \begin{itemize}
        \item $H$ is a path and every vertex of~$Y'$ has a neighbor in~$H$; or
        \item $H$ is a subdivided comb, the line graph of a subdivided comb, a subdivided star, or the line graph of a subdivided star. 
        Moreover, every vertex of~$Y'$ has a unique neighbor in~$H$ and~${H \cap N(Y')}$ is the set of simplicial vertices of~$H$. 
    \end{itemize}
\end{lemma}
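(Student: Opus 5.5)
Since the paper quotes this statement from Chudnovsky et al.\ rather than proving it, what follows is my own plan, aimed only at some (huge) function~$\mu(h)$.

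\emph{Setting up a minimal connected set.} Fix a large~$N$, let $\mu(h) \geq N$, and fix $Y_0 \subseteq Y$ with $\abs{Y_0} = N$. Among all connected induced subgraphs of~$G-Y$ in which every vertex of~$Y_0$ has a neighbour, choose one, $H_0$, that is inclusion-wise minimal; it exists because $G-Y$ itself qualifies. Minimality gives the key property: every vertex $x \in H_0$ that is not a cut vertex of~$H_0$ has a \emph{private} vertex $y_x \in Y_0$ whose only neighbour in~$H_0$ is~$x$. Otherwise $H_0 - x$ would be a smaller valid choice. Two easy outcomes come first.
\begin{itemize}
    \item If some vertex of~$H_0$ has at least~$h$ neighbours in~$Y_0$, that single vertex is a path, giving the first outcome.
    \item If $H_0$ is itself an induced path, the first outcome holds with $H = H_0$ and any $h$ vertices of~$Y_0$.
\end{itemize}
So we may assume every vertex of~$H_0$ has fewer than~$h$ neighbours in~$Y_0$. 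Then $\abs{H_0} \geq N/h$ is large.

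\emph{Finding a star or comb.} Take a BFS spanning tree~$T$ of~$H_0$ from some root. I would then show that $T$ has many leaves, or else extract a long induced path meeting many $Y_0$-vertices, which is again the first outcome. Since degrees into~$Y_0$ are bounded, a tree with few leaves forces a long geodesic path collecting at least~$h$ vertices of~$Y_0$. With many leaves, apply the standard fact that a tree with sufficiently many leaves contains a subdivision of $K_{1,k}$ or of a comb with $k$ leaves, with the leaves of the structure among the leaves of~$T$. Then shrink~$H_0$ to the union of the corresponding tree paths. Minimality, applied again to this subgraph with the $Y_0$-vertices private to its leaves, lets us take these paths to be BFS geodesics, and so essentially induced. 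The only chords that survive sit at branch vertices, where the three branches either meet at a single vertex or pairwise through a triangle. The first case gives a subdivided star or comb; the second gives the line graph of a subdivided star or comb. A Ramsey argument over the branch points makes this choice uniform.

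\emph{Cleaning the attachments.} For each leaf~$x$, keep its private vertex~$y_x$, so $y_x$ has the unique neighbour~$x$ in the structure. Two further conditions are needed. First, the attachment points must be exactly the simplicial vertices: discard branches whose leaf is not simplicial after the triangle/vertex normalisation, and pass to a sub-star or sub-comb. Second, we need $\abs{Y'} = h$, obtained by choosing $k$ large compared with~$h$.

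The main obstacle is the second step: turning a topological star or comb in the spanning tree into an \emph{induced} subdivided star/comb or its line graph, while keeping private $Y$-neighbours at exactly the simplicial vertices. Edges of~$H_0$ between different branches can create unwanted adjacencies. I would handle this by repeatedly re-invoking minimality on the shrunken subgraph, together with Ramsey's theorem on the branches. Doing so forces any remaining cross edges to occur only near branch points, which is precisely the line-graph alternative.
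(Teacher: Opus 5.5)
The paper does not prove this lemma; it imports it from Chudnovsky et al. So your plan has to stand on its own, and it has a genuine gap exactly at the step you flag as the main obstacle.

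\textbf{What is sound.} Your opening reductions are fine. Leaves of any spanning tree of $H_0$ are non-cut vertices, so they carry private neighbours. If the BFS tree has at most $L$ leaves, then the at most $L$ root-to-leaf geodesics are induced paths covering $H_0$. By pigeonhole over $Y_0$, one of them meets at least $N/L$ vertices of $Y_0$.

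\textbf{Where it breaks.} The problem is the passage from a subdivided star or comb $S$ in the BFS tree $T$ to an induced structure.
\begin{itemize}
\item Being geodesics only makes each branch of $S$ induced on its own.
\item The BFS property only forces edges between different branches to join vertices on equal or consecutive levels.
\item Minimality of $H_0$ gives no further control. If every vertex of $G-Y$ has its own private neighbour in $Y$, then $H_0=G-Y$ can be any connected graph, so any pattern of same-level edges between branches is compatible with everything you use.
\end{itemize}
For example, take an $n\times n$ grid and the BFS tree from a corner in which every vertex outside the top row takes its parent in its own column. This tree is a comb whose teeth (the columns) are joined by an edge in every row.

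Re-minimising inside $V(S)$ with respect to the private neighbours of the leaves does not repair this. It returns some minimal connected induced subgraph containing the leaves; in the grid, that is a segment of the bottom row. In general this subgraph is not a union of your tree paths, so the geodesic structure you intend to exploit is lost.

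Ramsey's theorem over the branches can make the presence of cross edges uniform, but it cannot remove them. In the uniform case where every two branches are joined, at depths that vary from pair to pair, you have no argument. So the assertion that the only surviving chords sit at branch vertices is unsupported, and it is essentially the whole content of the lemma.

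\textbf{What would close the gap.} You need a structural analysis of the minimal subgraph itself, not of a spanning tree. By minimality, every vertex of $H_0$ is a cut vertex or has a private neighbour in $Y_0$. So in the block-cut tree, each vertex of a block is either attached to $Y_0$ or has a hanging piece containing such a vertex. One natural route:
\begin{itemize}
\item \emph{A block with many vertices and a vertex of large degree inside the block.} Ramsey in that neighbourhood gives either a stable set, hence a subdivided star, or a clique, hence the line graph of a subdivided star. The legs run into the hanging pieces.
\item \emph{A block with many vertices and bounded degree.} It contains a long induced path. This gives either a comb whose teeth are such legs, or the path outcome.
\item \emph{All blocks small.} Find a large star or comb in the block-cut tree. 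Cross each block using the standard fact that a minimal connected induced subgraph containing three given vertices is a path, a subdivided claw, or a triangle with three disjoint legs. Then use pigeonhole to make all branch points of the same type.
\end{itemize}
Without an argument of this kind, your second step remains an assertion rather than a proof.
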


\paragraph{Tree-decompositions, treewidth, and tree-independence number}
A \emph{tree-decomposition} of a graph~$G$ consists of a tree~$T$ and a non-null subtree~$T_v$ of~$T$ for each vertex~$v$ of~$G$ such that for every edge~$uv$ of~$G$, $T_u$ and $T_v$ have a common node.
For each node~$t$ of the tree~$T$, we let~${X_t \coloneqq \{v \colon t \in V(T_v)\}}$ be the \emph{bag} corresponding to~$t$.

The \emph{width} of a tree-decomposition as the maximum of~${\abs{X_t}-1}$ over the nodes~$t$ of the tree. 
The \emph{treewidth} of~$G$, denoted by~$\tw(G)$, is the minimum of the widths of its tree-decompositions. 

The \emph{$\alpha$-width} of a tree-decomposition is the maximum of~$\alpha(X_t)$ over the nodes~$t$ of the tree. 
The \emph{tree-independence number} of~$G$ is the minimum of the $\alpha$-widths of its tree-decompositions. 

\paragraph{Vertex-weighted graphs and balanced separators}
We consider graphs where the vertices are weighted. 
A \emph{weighting}~${\mathsf{w} \colon V(G) \to \mathbb{R}_{\geq 0}}$ on~$G$ is a map from~$V(G)$ to the non-negative reals. 
For a set~${X \subseteq V(G)}$ of vertices of~$G$, we write~${\mathsf{w}(X) \coloneqq \sum_{v \in X} \mathsf{w}(v)}$. 
We say~$\mathsf{w}$ is \emph{nontrivial} if~${\mathsf{w}(G) > 0}$, and \emph{trivial}, otherwise. 
For convenience, we often want to look at \emph{normal} weightings, that is, weightings with~${\mathsf{w}(G) = 1}$. 

Given a graph~$G$, a weighting~$\mathsf{w}$ of~$G$, and a real number~${c \in [\nicefrac{1}{2},1)}$, we call a set~${S \subseteq V(G)}$ of vertices a \emph{$(\mathsf{w},c)$-balanced separator of~$G$} if~${\mathsf{w}(D) \leq c\cdot\mathsf{w}(G)}$ for every component~$D$ of~${G - S}$. 

Clearly, given any graph~$G$ with a nontrivial weighting~$\mathsf{w}$ of~$G$, we can define a normal weighting~$\mathsf{w}'$ by setting~${\mathsf{w}'(v) = \nicefrac{\mathsf{w}(v)}{\mathsf{w}(G)}}$ for all $v\in V(G)$. 
It is now easy to see that for every~${c \in [\nicefrac{1}{2},1)}$ a set~${S \subseteq V(G)}$ is a $(\mathsf{w},c)$-balanced separator if and only if it is a $(\mathsf{w}',c)$-balanced separator. 
We observe also that for every~${c \in [\nicefrac{1}{2},1)}$, every~$(\mathsf{w},c)$-balanced separator is a $(\mathsf{w},c')$-balanced separator for every~$c'$ with~${c \leq c' < 1}$. 

A standard argument (e.g., by adapting the proof of {\cite[Lemma 7.19]{CyganFKLMPPS:2015:ParameterizedAlgorithms}) shows the following.

\begin{observation}\label{obs:tree-alpha-bs}
    Let~$G$ be a graph, let~${c \in [\nicefrac{1}{2},1)}$, and let~$\mathsf{w}$ be a normal weighting on~$G$. 
    If~$G$ has tree-independence number~$k$, then~$G$ has a ${(\mathsf{w},c)}$-balanced separator with independence number at most~$k$. 
\end{observation}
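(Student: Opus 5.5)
We adapt the classical argument that a tree-decomposition yields a balanced separator consisting of a single bag (as in the proof of \cite[Lemma 7.19]{CyganFKLMPPS:2015:ParameterizedAlgorithms}). Fix a tree-decomposition $(T,\{T_v\}_{v \in V(G)})$ of $G$ with $\alpha$-width $k$. We will show that some bag $X_t$ is a $(\mathsf{w},c)$-balanced separator. This suffices, because $\alpha(G[X_t]) \le k$ by the choice of the decomposition. Since every $(\mathsf{w},\nicefrac12)$-balanced separator is a $(\mathsf{w},c)$-balanced separator for each $c \ge \nicefrac12$, it is enough to treat $c = \nicefrac12$.

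First we relate components of $G - X_t$ to the tree. Let $t \in V(T)$ and let $D$ be a component of $G - X_t$. For each $v \in V(D)$ the subtree $T_v$ avoids $t$, so it lies in a single component of $T - t$. If $uv$ is an edge of $D$, then $T_u$ and $T_v$ share a node, so they lie in the same component of $T - t$. As $D$ is connected, all the subtrees $T_v$ with $v \in V(D)$ lie in one common component $C$ of $T - t$. Consequently, for each neighbour $t'$ of $t$ in $T$, let $W_{t \to t'}$ be the set of vertices $v \notin X_t$ with $T_v$ contained in the component of $T - t$ containing $t'$. Then every component of $G - X_t$ is contained in a single set $W_{t \to t'}$, and these sets are pairwise disjoint.

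Next we orient the edges of $T$. Call the edge $tt'$ heavy towards $t'$ if $\mathsf{w}(W_{t \to t'}) > \nicefrac12$. For a fixed $t$, at most one neighbour $t'$ can make $tt'$ heavy towards $t'$, since the sets $W_{t \to t'}$ are disjoint and $\mathsf{w}(G)=1$. Suppose some edge $tt'$ were heavy in both directions. The set $W_{t \to t'}$ consists of vertices whose subtrees lie on the $t'$-side of the edge and avoid $t$. The set $W_{t' \to t}$ consists of vertices whose subtrees lie on the $t$-side and avoid $t'$. These two sets are disjoint, so their weights sum to at most $1$, which is impossible if both exceed $\nicefrac12$.

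Now start at any node and repeatedly move along the heavy edge leaving the current node, as long as one exists. The walk never reverses an edge, because no edge is heavy in both directions. Since $T$ is a finite tree, a walk that never immediately backtracks cannot revisit a node, so it terminates at a node $t$ with no heavy outgoing edge. Every component $D$ of $G - X_t$ lies in some $W_{t \to t'}$, so $\mathsf{w}(D) \le \mathsf{w}(W_{t \to t'}) \le \nicefrac12 \le c$. Hence $X_t$ is the desired balanced separator.

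The only step requiring care is the first one: checking that each component of $G - X_t$ lives entirely in one branch of $T - t$. This follows directly from the subtree form of the definition of a tree-decomposition.
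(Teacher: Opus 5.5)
Your proof is correct and takes the same approach as the paper, which gives no separate proof and simply refers to the standard adaptation of Lemma 7.19 of Cygan et al.: show that some bag of a tree-decomposition of $\alpha$-width $k$ is itself a $(\mathsf{w},\nicefrac{1}{2})$-balanced separator. Your argument carries this out fully: you check that each component of $G - X_t$ lies in a single branch of $T - t$, show that no edge of $T$ is heavy in both directions, and follow heavy edges to a node with no heavy outgoing edge.
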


We will also use the following lemma to bound the tree-independence number when we have balanced separators. 

\begin{lemma}[{Chudnovsky et al.~\cite[Lemma 7.1]{ChudnovskiHLS2026:TIN2}}]
    \label{lem:small-tree-alpha}
    Let~$G$ be a graph, let~${c \in [\nicefrac{1}{2},1)}$, and let~$d$ be a positive integer. 
    If for every normal weighting~$\mathsf{w}$ on~$G$, 
    there is a ${(\mathsf{w},c)}$-balanced separator~$X$ with~${\alpha(X) \leq d}$, 
    then the tree-independence number of~$G$ is at most~${\frac{3-c}{1-c}d}$. 
\end{lemma}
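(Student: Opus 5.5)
The plan is the standard Robertson--Seymour-type recursive construction of a tree-decomposition from balanced separators, run with independence number in place of cardinality. Set $m \coloneqq \frac{2}{1-c}\,d$, so that the target bound is $m+d=\frac{3-c}{1-c}d$. I would prove the following stronger statement by induction on $\abs{V(H)}$.

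\emph{Claim.} For every induced subgraph~$H$ of~$G$ and every set ${W \subseteq V(H)}$ with ${\alpha(W) \leq m}$, the graph~$H$ has a tree-decomposition of $\alpha$-width at most ${m+d}$ that has a bag containing~$W$.

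Applying the Claim with ${H=G}$ and ${W=\emptyset}$ gives the lemma. Note that the hypothesis is about $G$, but a separator of~$G$ restricted to~$V(H)$ is still a separator of~$H$ for any weighting supported on~$V(H)$. So we may use the hypothesis inside every~$H$.

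\textbf{Inductive step.} If ${\alpha(V(H)) \leq m+d}$, a single bag suffices. Otherwise we may enlarge~$W$ inside~$H$ until ${\alpha(W) = m}$; a larger $W$ is harmless, since the desired bag must only contain~$W$. Let~$I$ be a maximum independent set of~$W$ and let~$\mathsf{w}$ be the normal weighting on~$G$ that is uniform on~$I$ and zero elsewhere. By hypothesis there is a $(\mathsf{w},c)$-balanced separator~$X$ with ${\alpha(X)\le d}$; replace it by ${X\cap V(H)}$. The root bag is ${W\cup X}$, which has $\alpha$-width at most ${m+d}$.

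For each component~$D$ of ${H-X}$:
\begin{itemize}
    \item set ${H_D \coloneqq H[D\cup ((W\cup X)\cap N[D])]}$ and ${W_D \coloneqq (W\cup X)\cap V(H_D)\setminus (D\setminus W)}$, or more simply ${W_D \coloneqq (W \cap D)\cup (X\cap N(D))}$;
    \item recurse on ${(H_D, W_D)}$;
    \item attach the resulting decomposition to the root bag at a bag containing~$W_D$.
\end{itemize}
Edges and running intersection are then checked exactly as in the treewidth proof. For the induction to apply we need ${\abs{V(H_D)}<\abs{V(H)}}$ and ${\alpha(W_D)\le m}$. The first holds because $W \not\subseteq$ a single component together with its attachments once ${\alpha(W)=m < \alpha(V(H))-d}$; I would handle this by choosing~$W$ to be a maximal extension.

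\textbf{Main obstacle: bounding $\alpha(W_D)$.} The separator only guarantees ${\abs{I\cap D}\le c\,\abs{I}=cm}$, and $\alpha$ is not additive: $W\cap D$ might still contain a different independent set of size close to~$m$. My first attempt is to iterate the separation. While some component~$D$ has ${\alpha(W\cap D) > cm + d}$, take a maximum independent set of ${W\cap D}$ and apply the hypothesis again to obtain a further separator. Each round cuts the independence number of the heavy part of~$W$ by a factor~$c$, so the union of all separators used is controlled by a geometric series ${d(1+c+c^2+\cdots)\le \frac{d}{1-c}}$ in $\alpha$-width. Alternatively, I would charge each round to~$W$ more cleverly, so that the total added to the root bag is at most $m+d$ and each child receives ${\alpha(W_D)\le c\,m + \frac{d}{1-c}+d\le m}$. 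This inequality is exactly where the choice ${m=\frac{2}{1-c}d}$, and hence the constant $\frac{3-c}{1-c}$, should come from.

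I expect getting this bookkeeping to close with precisely that constant to be the delicate part. Everything else is the routine verification that the glued object is a tree-decomposition.
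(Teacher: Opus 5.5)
Your framework is the right one: a root bag $W\cup X$, a weighting uniform on a maximum independent set $I$ of $W$, recursion on the components, and $m\approx\frac{2d}{1-c}$. The paper itself only cites this lemma, so I am judging your argument on its own. The step you flag as the main obstacle is never closed, and the workaround you sketch fails, so there is a genuine gap.

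\emph{Why the workaround fails.} Iterating separators does not produce a geometric series. Each new separator can add up to $d$ to the independence number of the root bag, so $k$ rounds cost up to $kd$, not $d\sum_i c^i$. Even granting your bound, the inequality $c\,m+\frac{d}{1-c}+d\le m$ is false for $m=\frac{2d}{1-c}$, since the left side equals $\frac{(2+c)d}{1-c}$. Your progress claim $|V(H_D)|<|V(H)|$ is also not actually argued. Finally, the enlargement step needs $m$ to be an integer.

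\emph{The missing idea.} A one-line exchange argument makes any iteration unnecessary. Since $D$ is a component of $H-X$, we have $N_H(D)\subseteq X$. Take any independent $J\subseteq W\cap D$. Then $J\cup(I\setminus N_H[D])$ is an independent subset of $W$, and $I$ is maximum in $W$. Hence $|J|\le|I\cap N_H[D]|\le|I\cap D|+|I\cap X|\le c|I|+d$. With $W_D\coloneqq(W\cap D)\cup N_H(D)$, this gives $\alpha(W_D)\le c\,\alpha(W)+2d$.

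\emph{Finishing the bookkeeping.} Enlarge $W$ to $\alpha(W)=m\coloneqq\lfloor\frac{2d}{1-c}\rfloor$. This is possible whenever $\alpha(V(H))>m+d$; otherwise use a single bag. Then $\alpha(W_D)\le cm+2d\le\frac{2d}{1-c}$, so $\alpha(W_D)\le m$ by integrality. The root bag satisfies $\alpha(W\cup X)\le m+d\le\frac{3-c}{1-c}d$. This is where the constant comes from.

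\emph{Progress.} It follows from the same weighting. Suppose $V(H_D)=V(H)$ for some component $D$. Then $I\subseteq D\cup X$, so $|I\cap X|\ge(1-c)|I|=(1-c)m$. Now $m>\frac{2d}{1-c}-1$ and $d\ge1>1-c$, so $(1-c)m>2d-(1-c)>d$. This contradicts $\alpha(X)\le d$.

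The rest, namely restricting $X$ to $V(H)$ and gluing each child decomposition at a bag containing $W_D$, works as you describe.
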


An important ingredient of our proof is the following sufficient condition for bounded tree-independence number, which is an immediate consequence of~\cite[Lemma 3.2 and Theorem 3.11]{DallardMS:2024:TWvsOmega3}.

\begin{theorem}
    For every integer~${q \geq 2}$ and every graph~$G$ such that every minimal separator of~$G$ induces a subgraph with independence number less than~$q$, the tree-independence number of~$G$ is at most~${2q-2}$. 
\end{theorem}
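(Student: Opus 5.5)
The plan is to pass to a minimal triangulation of~$G$ and bound the independence number of each of its maximal cliques, using only the hypothesis on minimal separators. Let~$H$ be a minimal triangulation of~$G$, that is, a chordal supergraph on~$V(G)$ that is inclusion-minimal. A clique tree of~$H$ gives a tree-decomposition of~$G$ whose bags are the maximal cliques of~$H$. By the theory of Bouchitté and Todinca, these maximal cliques are exactly \emph{potential maximal cliques} of~$G$. It therefore suffices to show that every potential maximal clique~$\Omega$ of~$G$ satisfies~$\alpha(G[\Omega]) \le 2q-2$.

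I will use the following standard facts about a potential maximal clique~$\Omega$.
\begin{enumerate}
\item[(i)] For every component~$C$ of~$G-\Omega$, the set~$N(C)$ is a minimal separator of~$G$ and is a proper subset of~$\Omega$.
\item[(ii)] For every two non-adjacent vertices~$u,w \in \Omega$, there is a component~$C$ of~$G-\Omega$ with~$u,w \in N(C)$.
\end{enumerate}
If~$G-\Omega$ has no components, then by~(ii) $\Omega$ is a clique and there is nothing to prove. Otherwise, fix a component~$C$ of~$G-\Omega$ and let~$S \coloneqq N(C)$. By~(i) and the hypothesis, $\alpha(S) \le q-1$.

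The remaining task is to show~$\alpha(\Omega \setminus S) \le q-1$; then~$\alpha(\Omega) \le \alpha(S) + \alpha(\Omega\setminus S) \le 2q-2$.
\begin{enumerate}
\item[(a)] Because~$S$ is a minimal separator with full component~$C$, the set~$\Omega \setminus S$ lies in a single component~$D$ of~$G-S$, and~$D$ is also $S$-full. This is the usual description of a potential maximal clique containing a minimal separator.
\item[(b)] I then aim to find one further minimal separator~$S'$ with~$\Omega \setminus S \subseteq S'$. The first candidate is~$S' \coloneqq N(D')$, where~$D'$ is the component of~$G - (\Omega\setminus S)$, or of~$G-\Omega$ in a suitable sense, containing~$C$. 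An alternative candidate is the separator between~$\Omega$ and a neighbouring clique in the clique tree of~$H$, which is a minimal separator of~$G$.
\item[(c)] If no single such~$S'$ exists, I would fall back on~(ii). Take any independent set~$I \subseteq \Omega\setminus S$ and fix~$x \in I$. Every other vertex~$y \in I$ shares some~$N(C_y)$ with~$x$, with~$C_y \subseteq D$. I would then try to show, using fullness of~$D$, that all the~$C_y$ can be chosen to be the same component. This again places~$I$ inside one minimal separator, so~$\abs{I} \le q-1$.
\end{enumerate}

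The main obstacle is step~(b)/(c): showing that~$\Omega\setminus S$, or at least every independent subset of it, lies inside a single minimal separator. This is exactly the content of Lemma~3.2 of~\cite{DallardMS:2024:TWvsOmega3}. If my direct argument fails, I will invoke that lemma together with their Theorem~3.11, which relates tree-independence number to potential maximal cliques of minimal triangulations. The rest of the argument is routine bookkeeping.
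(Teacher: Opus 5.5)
Your reduction to potential maximal cliques, facts (i) and (ii), and step (a) are fine. The step that carries all the weight, however, is false as stated. For an \emph{arbitrary} component $C$ of $G-\Omega$ one cannot bound $\alpha(\Omega\setminus N(C))$ by $q-1$, let alone put $\Omega\setminus N(C)$, or each independent subset of it, inside a single minimal separator.

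Here is a counterexample. Take vertices $s,c,t_1,t_2,t_3,e_1,e_2,e_3$, join $s$ to all other vertices, join $e_i$ to $t_j$ for all $j\neq i$, and add no further edges.
\begin{itemize}
\item Every minimal separator contains the universal vertex~$s$. Moreover, $G-s$ is the disjoint union of $\{c\}$ and the $6$-cycle $t_1e_2t_3e_1t_2e_3$. So the minimal separators are $\{s\}$ and the sets $\{s,u,v\}$ with $u,v$ non-adjacent on this cycle. All have independence number at most~$2$, so the hypothesis holds with $q=3$.
\item The set $\Omega=\{s,t_1,t_2,t_3\}$ is a potential maximal clique. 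The components of $G-\Omega$ are $\{c\}$ and the $\{e_i\}$, with neighbourhoods $\{s\}$ and $\{s\}\cup\{t_j\colon j\neq i\}$; none equals $\Omega$, and each pair $t_i,t_j$ is seen by $e_k$.
\item Choosing $C=\{c\}$ gives $S=\{s\}$ and $\Omega\setminus S=\{t_1,t_2,t_3\}$, an independent set of size $3>q-1$. Hence no minimal separator contains it, so neither candidate in (b) works ($G-\{t_1,t_2,t_3\}$ is even connected).
\item In (c), the components witnessing the pairs $t_1t_2$ and $t_1t_3$ are forced to be $\{e_3\}$ and $\{e_2\}$, so they cannot be chosen equal.
\end{itemize}
The bound $\alpha(\Omega)\le 2q-2$ still holds here, but only because $\alpha(S)=1$, not by your split.

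The missing idea is that the two separators must be chosen in terms of the independent set $I\subseteq\Omega$. In general they also cannot both lie inside $\Omega$. In the $1$-subdivision of $K_n$ ($n\ge 5$) with $\Omega$ the branch vertices, the only minimal separators contained in $\Omega$ are pairs, so two of them cover at most four of the $n$ vertices.

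One way to close the gap, for $|I|\ge 2$, is as follows.
\begin{itemize}
\item Choose $x,y\in I$ minimising the number of components of $G-\Omega$ whose neighbourhood contains both, and fix one such component $C_0$.
\item Let $A$ consist of $x$ and all components seeing $x$ but not $y$, and define $B$ symmetrically. These are connected and anticomplete. So if $B'$ is the component of $G-N[A]$ containing $B$, then $N(B')$ is a minimal separator containing every vertex of $I$ adjacent to both $A$ and $B$.
\item If $u\in I\setminus\{x,y\}$ is not adjacent to $A$, then the set of components seeing $x$ and $u$ is contained in the set of those seeing $x$ and $y$. By the minimal choice these sets are equal, so $u\in N(C_0)$. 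The same holds symmetrically if $u$ is not adjacent to $B$.
\end{itemize}
Hence $I\subseteq N(B')\cup N(C_0)$ and $|I|\le 2(q-1)$.

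Falling back on \cite{DallardMS:2024:TWvsOmega3} is what the paper itself does: it imports this statement from Lemma~3.2 and Theorem~3.11 there, without proof. But then your steps (a)--(c) contribute nothing. Moreover, the containment you attribute to Lemma~3.2 cannot be its content, since the example above shows it is false.
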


We make use of the above result by combining it with \zcref{obs:tree-alpha-bs} into the following corollary. 

\begin{corollary}
    \label{cor:minimalseparatorboundedtreealpha}
    Let~${q \geq 2}$ be an integer, let~$G$ be a graph, and let~$\mathsf{w}$ be a normal weighting on~$G$. 
    If~${\alpha(S) < q}$ for every minimal separator~$S$ in~$G$, 
    then there exists a $(\mathsf{w},\nicefrac{1}{2})$-balanced separator~$Z$ in~$G$ with~${\alpha(Z) \leq 2q-2}$. 
\end{corollary}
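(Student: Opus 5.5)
The corollary follows by chaining the preceding theorem with \zcref{obs:tree-alpha-bs}.

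First, by hypothesis every minimal separator~$S$ of~$G$ satisfies~${\alpha(S) < q}$. Since~${q \geq 2}$, the preceding theorem applies and shows that the tree-independence number of~$G$ is some~$k$ with~${k \leq 2q-2}$.

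Second, apply \zcref{obs:tree-alpha-bs} with~${c = \nicefrac{1}{2}}$, which lies in the admissible range~${[\nicefrac{1}{2},1)}$. Together with the given normal weighting~$\mathsf{w}$, this yields a $(\mathsf{w},\nicefrac{1}{2})$-balanced separator~$Z$ of~$G$ with~${\alpha(Z) \leq k \leq 2q-2}$, which is exactly the claim.

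There is no substantive obstacle. The only point to check is that \zcref{obs:tree-alpha-bs} is stated for tree-independence number exactly~$k$, while we only have an upper bound. This is harmless: we simply take~$k$ to be the actual tree-independence number, which is at most~$2q-2$.

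If one wanted to verify the observation itself, the standard argument runs as follows. Take a tree-decomposition of $\alpha$-width~$k$ and orient each edge of the tree towards the side containing a component of weight more than~$\nicefrac{1}{2}$ after deleting the adhesion. Following these orientations, one reaches a node whose bag~$X_t$ leaves every component of~${G - X_t}$ with weight at most~$\nicefrac{1}{2}$. This bag is the desired~$Z$, and~${\alpha(X_t) \leq k}$.
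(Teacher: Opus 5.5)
Your proof is correct and is exactly the paper's argument: the corollary follows by feeding the bound $2q-2$ on the tree-independence number from the preceding theorem into \zcref{obs:tree-alpha-bs} with $c = 1/2$. Your handling of $k$ (taking it to be the actual tree-independence number) and your sketch of the orientation argument behind the observation are sound, but the paper does not need them since it simply cites the observation.
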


\section{The main theorem}
\label{sec:main}

The  following lemma was proved (in a different context) in joint work of the first author with Sepehr Hajebi and Sophie Spirkl; it became the basis for Theorem 6.1 of \cite{ChudnovskiHLS2026:TIN2}. 
We thank Hajebi and Spirkl  for allowing us to include it here. 
For us, it outlines the big picture of our proof strategy when used in conjunction with \zcref{lem:small-tree-alpha}. 
As the context in~\cite{ChudnovskiHLS2026:TIN2} is slightly different, we include the proof of this lemma for completeness. 

\begin{lemma}
    \label{lem:balanced-separator-small-alpha}
    For every positive integers~$a$ and~$b$ and every~${c \in [\nicefrac{1}{2},1)}$ the following holds. 
    Let~$G$ be a graph such that
    \begin{enumerate}
        [label=(\alph*)]
        \item \label{item:lem:balanced-separator-small-alpha-a} for every pair~${u,v}$ of distinct and non-adjacent vertices of~$G$ there exists a $(u,v)$-separator ${A_{u,v} \subseteq V(G) \setminus \{u,v\}}$ such that ${\alpha(A_{u,v}) \leq a}$; and
        \item \label{item:lem:balanced-separator-small-alpha-b} for each connected induced subgraph~$G'$ of~$G$ and every normal weighting $\mathsf{w}'$ on~$G'$ there exists a vertex~${v \in V(G')}$ and a set~${B \subseteq V(G')}$ such that~${\alpha(B) \leq b}$ and ${N_{G'}[v] \cup B}$ is a ${(\mathsf{w}',c)}$-balanced separator of~$G'$. 
    \end{enumerate}
    Then, for every normal weighting~$\mathsf{w}$ on~$G$ there exists a $(\mathsf{w},c)$-balanced separator~$S$ in~$G$ such that~${\alpha(S) \leq a + 2b + 4}$. 
\end{lemma}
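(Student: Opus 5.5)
The plan is to apply hypothesis~\ref{item:lem:balanced-separator-small-alpha-b} twice, at two different vertices, and then replace the two unbounded closed neighbourhoods by a single separator from hypothesis~\ref{item:lem:balanced-separator-small-alpha-a} between those vertices. This matches the budget $a + 2b + 4$: two sets of type~$B$, one set of type~$A$, and a few extra vertices.

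\emph{First step.} Apply~\ref{item:lem:balanced-separator-small-alpha-b} to~$G$ itself (component by component if~$G$ is disconnected, which is harmless). This gives a vertex~$v_1$ and a set~$B_1$ with $\alpha(B_1)\le b$ such that every component of $G-(N[v_1]\cup B_1)$ has weight at most~$c$. If every component of $G-(B_1\cup\{v_1\})$ already has weight at most~$c$, we are done with $S=B_1\cup\{v_1\}$. Otherwise there is a unique component~$C$ of $G-(B_1\cup\{v_1\})$ with $\mathsf{w}(C)>c\ge\nicefrac12$.

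\emph{Second step.} If~$C$ has no neighbour of~$v_1$, then~$C$ is also a component of $G-(N[v_1]\cup B_1)$, so $\mathsf{w}(C)\le c$, a contradiction. Hence~$C$ meets~$N(v_1)$. Apply~\ref{item:lem:balanced-separator-small-alpha-b} to the connected graph $G'=G[C]$ with the normalised weighting $\mathsf{w}'=\mathsf{w}|_C/\mathsf{w}(C)$. This gives~$v_2\in C$ and~$B_2\subseteq C$ with $\alpha(B_2)\le b$ such that every component of $C-(N_C[v_2]\cup B_2)$ has $\mathsf{w}$-weight at most $c\,\mathsf{w}(C)\le c$.

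\emph{Third step.} Now build the separator.
\begin{itemize}
\item If~$v_1$ and~$v_2$ are distinct and non-adjacent, take $A=A_{v_1,v_2}$ and set
\[
S=B_1\cup B_2\cup A\cup\{v_1,v_2\}.
\]
\item If~$v_1v_2$ is an edge, I would modify the construction: replace~$v_2$ by a suitable vertex, or separate~$v_1$ from a neighbour-free vertex of the relevant region. The constant~$4$ gives slack for adding up to two more single vertices; this is also where those vertices would come from.
\end{itemize}
Then $\alpha(S)\le a+2b+4$.

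\emph{Verification.} Suppose some component~$K$ of $G-S$ has weight exceeding~$c$.
\begin{itemize}
\item Since~$K$ avoids~$B_1\cup\{v_1\}$ and is heavy, we get $K\subseteq C$.
\item If~$K$ avoided~$N(v_1)$, then~$K$ would lie inside a component of $G-(N[v_1]\cup B_1)$, contradicting its weight. So~$K$ contains a neighbour of~$v_1$.
\item Similarly, $K\subseteq C-B_2$, and if~$K$ avoided~$N(v_2)$ it would lie in a component of $C-(N_C[v_2]\cup B_2)$, which is light. So~$K$ contains a neighbour of~$v_2$.
\item Then~$v_1$, a path through~$K$, and~$v_2$ form a $(v_1,v_2)$-path whose internal vertices avoid $A_{v_1,v_2}$. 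This contradicts the choice of~$A_{v_1,v_2}$.
\end{itemize}

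The main obstacle I expect is the degenerate case where~$v_2$ equals or is adjacent to~$v_1$, since then hypothesis~\ref{item:lem:balanced-separator-small-alpha-a} is not directly applicable. There I would first try the following: pick~$u\in K$, or in the heavy part, non-adjacent to~$v_1$, and use~$A_{v_1,u}$ together with~$v_2$ and~$u$. A heavy component that meets both the region near~$v_1$ and the region reached by~$u$ should again yield a forbidden path avoiding the separator. A secondary point is checking that the re-weighting on~$C$ costs nothing, and it does not, since $c\,\mathsf{w}(C)\le c$.
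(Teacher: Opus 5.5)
Your argument for the case where $v_1$ and $v_2$ are non-adjacent is correct (it even gives $\alpha(S)\le a+2b+2$). The case $v_1v_2\in E(G)$, however, is a genuine gap, and not a marginal one. The vertex $v_2$ is whatever hypothesis~(b) returns inside $C$, and your own second step shows that $C$ contains neighbours of $v_1$, so nothing prevents $v_2\in N(v_1)$.

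Your proposed remedy does not work. The contradiction in your verification needs \emph{both} endpoints of the separator from~(a) to be vertices whose closed neighbourhood, together with a set of small independence number, is a balanced separator. That is what forces the heavy component $K$ to meet the neighbourhood of each endpoint. An arbitrary vertex $u$ non-adjacent to $v_1$ has no such property. So $K$ may join $v_1$ to $v_2$ while avoiding $N(u)$ entirely, and then $A_{v_1,u}$ gives no contradiction. Choosing $u\in K$ is also circular, since $K$ depends on $S\ni u$. Iterating your own construction does not help either: because you deleted $B_1\cup\{v_1\}$ to obtain $C$, a further round would put a third set $B_3$ into $S$ and exceed the budget.

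The missing idea is to delete only the clique of ``good'' vertices found so far, which costs~$1$ in independence number, instead of deleting the sets $B_i$. Call $v$ good if $N[v]\cup B_v$ is a $(\mathsf{w},c)$-balanced separator for some $B_v$ with $\alpha(B_v)\le b+1$, and let $T$ be the set of good vertices.

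If two good vertices are non-adjacent, your argument with $S=A_{v_1,v_2}\cup\{v_1,v_2\}\cup B_{v_1}\cup B_{v_2}$ gives $\alpha(S)\le a+2(b+1)+2=a+2b+4$. This is where the constant~$4$ comes from.

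If $T$ is a clique, then either $T$ is itself a balanced separator, or you apply~(b) to the re-normalised heavy component $G'$ of $G-T$. This gives $v'\in V(G')$ and $B'$ with $N[v']\cup B'\cup T$ balanced in $G$ and $\alpha(B'\cup T)\le b+1$. So $v'$ is good but $v'\notin T$, a contradiction. This is the paper's proof.

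Within your framework, you get the same effect by growing a clique $Q\ni v_1$ of good vertices. Each time, apply~(b) to the heavy component of $G-Q$ (not of $G-(B_1\cup Q)$). Each new vertex lies outside $Q$, so by finiteness you eventually obtain a non-adjacent pair, or find that $Q$ itself is a balanced separator.
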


\begin{proof}
    Fix a normal weighting $\mathsf{w}$ on $G$. 
    First, let us define~${s \coloneqq a + 2b + 4}$. 
    
    Let~$T$ be the set of vertices~$v$ of~$G$ such that there exists a set~${B_v \subseteq V(G)}$ such that~${\alpha(B_v) \leq b + 1}$ and~${N[v] \cup B_v}$ is a ${(\mathsf{w},c)}$-balanced separator of~$G$. 
    Note that~$T$ is nonempty by property~\ref{item:lem:balanced-separator-small-alpha-b}. 
    We consider two cases. 
    
    \medskip
    
    \noindent\textit{Case 1: $T$ is not a clique.}
    Let~$v_1$ and~$v_2$ be two nonadjacent vertices in~$T$. 
    For~${i \in [2]}$, since~${v_i \in T}$, there exists a set $B_{v_i} \subseteq V(G)$ such that~${\alpha(B_{v_i}) \leq b+1}$ and~${N[v_i] \cup B_{v_i}}$ is a $(\mathsf{w},\frac{1}{2})$\nobreakdash-balanced separator of~$G$. 
    By property \ref{item:lem:balanced-separator-small-alpha-a}, there exists a $(v_1,v_2)$-separator ${A_{v_1,v_2} \subseteq V(G)\setminus\{v_1,v_2\}}$ such that~${\alpha(A_{v_1,v_2}) \leq a}$. 
    Let ${S \coloneqq A_{v_1,v_2} \cup \{v_1,v_2\} \cup B_{v_1} \cup B_{v_2}}$. 
    Then~${\alpha(S) \leq a+2b+4 = s}$. 
    We claim that~$S$ is the desired ${(\mathsf{w},c)}$-balanced separator of~$G$. 
    Assume for a contradiction that there exists a component~$C$ of~${G - S}$ such that~${\mathsf{w}(C) > c}$. 
    
    We claim that~$N(v_i) \cap C \neq \emptyset$ for each $i \in [2]$. 
    Indeed, suppose for a contradiction that~${N(v_i) \cap C = \emptyset}$ for some~${i \in [2]}$.
    Then~$C$ is a connected induced subgraph of~${G - (N[v_i]\cup B_{v_i})}$.
    Since $N[v_i]\cup B_{v_i}$ is a $(\mathsf{w},c)$-balanced separator of~$G$, it follows that~${\mathsf{w}(C) \leq c}$, a contradiction.
    This proves the claim.  
    
    Now~${C \cup \{v_1,v_2\}}$ is a connected graph, and so there exists a path~$P$ from~$v_1$ to~$v_2$ with~${V(P) \subseteq V(C) \cup \{v_1,v_2\}}$. 
    This contradicts the fact that~$v_1$ and~$v_2$ belong to different components of~${G - A_{v_1,v_2}}$. 
    
    \medskip
    
    \noindent\textit{Case 2: $T$ is a clique.}
    Since~${\alpha(T) \leq 1 \leq s}$, we may assume that $T$ is not a ${(\mathsf{w},c)}$-balanced separator of~$G$.
    Let~${G'}$ be the component of~${G - T}$ for which~$\mathsf{w}(G')$ is maximized. 
    Then ${\mathsf{w}(G') > c \geq \frac{1}{2}}$, and clearly all other components of~${G - T}$ have~$\mathsf{w}$-weight less than~${\nicefrac{1}{2} \leq c}$. 
    Let~$\mathsf{w}'$ be the function defined on~$G'$ by setting~${\mathsf{w}'(x) \coloneqq \mathsf{w}(x)/\mathsf{w}(G')}$ for all~${x \in V(G')}$. 
    Then~$\mathsf{w}'$ is a normal weighting on~$G'$. 
    Note that every vertex~${x \in V(G')}$ satisfies~${\mathsf{w}(x) \leq \mathsf{w}'(x)}$. 
    
    We apply property \ref{item:lem:balanced-separator-small-alpha-b} to~$G'$ and~$\mathsf{w}'$, and, hence, obtain a vertex~${v' \in V(G')}$ and a set~${B' \subseteq V(G')}$ such that~${\alpha(B') \leq b}$ and ${N_{G'}[v'] \cup B'}$ is a ${(\mathsf{w}',c)}$-balanced separator of~$G'$. 
    Let~${B_{v'} \coloneqq B' \cup T}$. 
    Note that 
    \[
          G - (N_G[v'] \cup B_{v'}) 
        = G - (N_G[v'] \cup B' \cup T) 
        = (G - T) - (N_{G'}[v'] \cup B') 
        = G' - (N_{G'}[v']\cup B')\,.
    \]
    In particular, every component~$C$ of~${G - (N_{G}[v'] \cup B_{v'})}$ is also a component of \linebreak ${G' - (N_{G'}[v'] \cup B')}$ and, hence, satisfies~${\mathsf{w}(C) \leq \mathsf{w}'(C) \leq c}$. 
    It follows that 
    \[
        {\alpha(B_{v'}) \leq  \alpha(B') + \alpha(T) \leq b+1}
    \] 
    and~${N_{G}[{v'}] \cup B_{v'}}$ is a $(\mathsf{w},c)$-balanced separator of~$G$, 
    contradicting the fact that~${v' \not\in T}$. 
\end{proof}

To be able to use this lemma, we need to be able to satisfy the assumptions in the class of ${\{P_6,K_{2,t}\}}$-free graphs, where~${t \geq 2}$. 
This is achieved by the following two lemmas, the proofs of which are postponed to the next section. 

\begin{restatable}{lemma}{mainlema}
    \label{lem:small-alpha-separator}
    Let~${t \geq 2}$, let~$G$ be a ${\{P_6,K_{2,t}\}}$-free graph, and let~$a$ and~$b$ be two distinct and nonadjacent vertices in~$G$. 
    Then, there exists an $(a,b)$-separator~${S \subseteq V(G) \setminus \{a,b\}}$ such that~${\alpha(S) \leq 14(t-1)+3}$. 
\end{restatable}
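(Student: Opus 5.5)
We plan to take a minimal $(a,b)$-separator and bound its independence number directly. Since $a$ and $b$ are nonadjacent, there is an inclusion-wise minimal $(a,b)$-separator $S \subseteq V(G)\setminus\{a,b\}$. Let $A$ and $B$ be the components of $G-S$ containing $a$ and $b$; both are $S$-full, so every $s\in S$ has a neighbour in $A$ and a neighbour in $B$. Suppose $I\subseteq S$ is independent with $|I| > 14(t-1)+3$. We aim for a contradiction by producing either an induced $P_6$ or an induced $K_{2,t}$.

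The first step uses the $K_{2,t}$-freeness. Any two nonadjacent vertices have fewer than $t$ common neighbours inside an independent set. So for a fixed vertex $x\in A$, at most $t-1$ vertices of $I$ share any other fixed neighbour with it. More usefully, if $N(x)\cap I$ is large for many $x$, the pigeonhole principle gives $K_{2,t}$. We will split $I$ by distance from $a$ inside $A$. Since $G$ is $P_6$-free, the induced paths from $a$ have few vertices, and we intend to show that each vertex of $I$ has a neighbour in $A$ at distance at most a small constant from $a$. We then study the set $D$ of vertices of $A$ adjacent to some vertex of $I$.

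The second step is the core. Take $x,y\in I$ and induced paths $P_x$ in $A\cup\{x\}$ and $P_y$ in $B\cup\{y\}$ such that the path $x\,(\text{through }A)\,y$ together with a path through $B$ forms an induced cycle or a long induced path. In a $P_6$-free graph, any induced path $u - A - v - B - w$ with $u,v,w\in I$ and nonempty interiors already has at least $5$ vertices. Extending it by one more vertex gives $P_6$ unless strong adjacency conditions hold. Concretely, we will argue that for three vertices $u,v,w\in I$ with private neighbours $u'$, $v'$, $w'$ in $A$ and $B$, the sequence $u'\,u\,(\ldots)$ yields $P_6$. The consequence should be that all but a bounded number of vertices of $I$ have a common neighbour structure: the neighbourhoods of $I$ in $A$ (and in $B$) are "nested" or "dominated". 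Without $P_6$, we expect this to force a vertex $x\in A$ and a vertex $y\in B$ each adjacent to almost all of $I$. Two such dominating vertices in $A$ (or one in $A$, one in $B$, which are nonadjacent) then have at least $t$ common neighbours in $I$, which gives $K_{2,t}$. We will count the exceptional vertices by a small number of classes: for example, whether a vertex of $I$ has a neighbour in $A$ at distance $1$ from $a$, $2$, or $3$, and similarly on the $B$ side, plus $a$ and $b$ themselves. Each class is shown to contain at most $t-1$ vertices that are not dominated, which is meant to produce the constant $14(t-1)+3$.

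The main obstacle is the second step: turning the local $P_6$-exclusion arguments into a dominating vertex for a large subset of $I$. Here the $P_6$-freeness must be combined with a careful choice of minimal connecting paths. The natural tool is to pick $x\in N(a)$ maximizing $|N(x)\cap I|$. We then show that any $s\in I\setminus N(x)$ reaches $a$ through a short induced path that, extended through $x$ and into $B$, gives $P_6$ unless $s$ falls in one of the boundedly many exceptional configurations. If this direct approach fails, we would fall back on \zcref{lem:conntw15} applied to $Y=I$ and $G[A\cup I]$. The path, comb and star outcomes each contain long induced paths and yield $P_6$ once they are joined through $B$. The remaining outcome, a short path $H$ meeting all of $Y'$, gives a vertex of $H$ with many neighbours in $Y'$, and we proceed as above.
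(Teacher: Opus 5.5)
\textbf{There is a fatal gap at the very first step.} You take an \emph{arbitrary} inclusion-wise minimal $(a,b)$-separator $S$ and try to derive an induced $P_6$ or $K_{2,t}$ from a large independent set $I \subseteq S$. No such contradiction exists.

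Take an $n$-pyramid with presentation $(a,x_1,\dots,x_n,y_1,\dots,y_n)$, and add a vertex $b$ adjacent exactly to $y_1,\dots,y_n$.
\begin{itemize}
\item The set $\{b,y_1,\dots,y_n\}$ is a clique and the remaining vertices form a star centred at $a$. An induced path therefore uses at most $2$ clique vertices and at most $3$ star vertices, so it has at most $5$ vertices, and the graph is $P_6$-free.
\item Any two nonadjacent vertices have at most one common neighbour. So the graph is $C_4$-free, hence $K_{2,t}$-free for every $t\geq 2$.
\item Yet $S=\{x_1,\dots,x_n\}$ is a minimal $(a,b)$-separator, since both $\{a\}$ and $\{b,y_1,\dots,y_n\}$ are $S$-full, and $\alpha(S)=n$.
\end{itemize}
Had every minimal separator bounded independence number, the Dallard--Milani\v{c}--\v{S}torgel theorem would finish the main theorem at once. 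The paper needs its balanced-separator machinery precisely because this fails.

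The example also shows where your second step breaks. On the $a$-side, the vertex $a$ dominates $I$. On the $b$-side, however, every vertex sees at most one vertex of $I$: the neighbours of $I$ there form a clique matched to $I$, i.e.\ a line graph of a $1$-subdivided star. That is exactly the connectifier outcome missing from your case list (you treat only path, comb and star). It yields neither an induced $P_6$ nor a $K_{2,t}$, but a pyramid. Consequently the count of ``exceptional classes'' giving $14(t-1)+3$ has nothing behind it.

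\textbf{The missing idea:} pyramids are unavoidable, so the separator has to be \emph{built around} one rather than picked as an arbitrary minimal separator. The paper proceeds as follows.
\begin{enumerate}
\item Let $X$ be the set of neighbours of $a$ lying on induced $(a,b)$-paths. If $\alpha(X)$ is small, $X$ is the separator.
\item Otherwise, restrict to an independent subset of $X$ of size $3(t-1)+1$ and the component of $b$. Then $P_6$- and $K_{2,t}$-freeness force a pyramid $\Pi$ with apex $a$ and $b$ basic.
\item Delete the set $Z$ of vertices whose neighbourhood in $\Pi$ is not a clique. This costs $\alpha(Z)\leq 12(t-1)$: there are $12$ non-edges of $\Pi$, and $K_{2,t}$-freeness gives at most $t-1$ independent common neighbours of each.
\item In the component of $G-Z$ containing $\Pi$, every vertex lies in $\Pi$, is adjacent to $a$, or is $\Pi$-basic. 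Induced $(a,b)$-paths avoiding $V(\Pi)\setminus\{a\}$ then have length at most $3$.
\item Hence $V(\Pi)\setminus\{a\}$, together with the neighbours of $b$ adjacent to $a$ and the basic neighbours of $b$ that have a neighbour in $N(a)\setminus N(b)$, separates $a$ from $b$. Its independence number is at most $3+2(t-1)$.
\end{enumerate}
In the example above this produces $\{x_1,x_2,x_3,y_1,\dots,y_n\}$, with independence number $4$. That separator sits on the far side of the pyramid from the one your argument would examine.
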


\begin{restatable}{lemma}{mainlemb}
    \label{lem:balanced-separator-closed-neighborhood}
    There exists a function~${g \colon \mathbb{N} \to \mathbb{N}}$ such that the following holds.
    Let~${t \geq 2}$, let~$G$ be a connected ${\{P_6,K_{2,t}\}}$-free graph, and let~$\mathsf{w}$ be a normal weighting on~$G$. 
    Then, there exists a vertex~${z_0 \in V(G)}$ and a set~${Z \subseteq V(G)}$ such that~${\alpha(Z) \leq g(t)}$ and~${N[z_0] \cup Z}$ is a ${(\mathsf{w},\nicefrac{7}{8})}$-balanced separator of~$G$. 
\end{restatable}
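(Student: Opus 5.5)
The plan is to prove \zcref{lem:balanced-separator-closed-neighborhood} by a ``central vertex'' argument, combining the $P_6$-freeness (which forces short distances inside heavy components) with the minimal-separator control from \zcref{lem:small-alpha-separator}. If $N[z]\cup Z$ fails to be a $(\mathsf{w},\nicefrac{7}{8})$-balanced separator, there is a unique heavy component $D$ of $G-N[z]$ with $\mathsf{w}(D)>\nicefrac{7}{8}$. I choose $z_0$ and an accompanying bounded-$\alpha$ set $Z$ so as to minimise $\mathsf{w}(D_{z_0})$ (or a lexicographic refinement). I then derive a contradiction from the assumption that $D_{z_0}$ is still heavy, by exhibiting a neighbour $z_1\in N(z_0)\cap N(D_{z_0})$ whose heavy component is strictly lighter after adding a bounded-$\alpha$ set.

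\textbf{Step 1: bound $\alpha(N(D)\setminus\ldots)$.} Let $D$ be the heavy component of $G-N[z_0]$ and $S=N(D)\subseteq N(z_0)$. Then $S$ separates $z_0$ from $D$. By the minimal separator version of \zcref{lem:small-alpha-separator} (its proof should give a bound $14(t-1)+3$ for some separator), I want to replace $N(z_0)$ by a set of the form $S'\subseteq N(z_0)$ with $\alpha(S')\le f(t)$ that still separates $z_0$ from most of $D$. This part is not automatic, since a separator of small $\alpha$ between $z_0$ and a vertex of $D$ need not separate $z_0$ from all of $D$.

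\textbf{Step 2: the pyramid/structure analysis.} For $x\in S$, consider $N(x)\cap D$. $K_{2,t}$-freeness implies that two nonadjacent vertices of $S$ have fewer than $t$ common nonadjacent neighbours in $D$, since $z_0$ plus such vertices would otherwise give a $K_{2,t}$-like structure. $P_6$-freeness implies that paths $z_0\!-\!x\!-\!D\!-\!\cdots$ have length at most $4$, so $D$ has radius at most $3$ from $S$. I would use this to show that some vertex $z_1\in S$ dominates $D$ ``up to'' a bounded-$\alpha$ set. Concretely, the components of $D-N(z_1)$ attach to $N(z_1)$ only through vertices reachable without creating an induced $P_6$ $z_0$--$z_1$--$\cdots$.

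\textbf{Step 3: weight averaging.} Among the vertices of $S$, pick $z_1$ whose closed neighbourhood, together with $N[z_0]\cap$ (a bounded-$\alpha$ part), cuts $D$ into pieces of weight at most $\nicefrac{7}{8}$. If no single vertex works, the heavy components for distinct choices must overlap heavily, since each has weight $>\nicefrac{7}{8}$. From this overlap I would build either an induced $P_6$ or a $K_{2,t}$ using $t$ vertices of an independent set in $S$ sharing two common neighbours. The constant $\nicefrac{7}{8}$ leaves slack for intersecting three heavy sets.

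The main obstacle is Step 2. It requires controlling how $D$ attaches to $N(z_0)$ with only boundedly many independent vertices, which is where I expect the pyramid analysis referred to in the overview to be needed, together with \zcref{lem:conntw15}. I would try first to apply \zcref{lem:conntw15} with $Y$ a large independent subset of $S$ and $G-Y\supseteq D$. A long path or a subdivided star/comb hit by many vertices of $Y$ produces either $P_6$ (via $z_0$ and a long induced path) or $K_{2,t}$ (via $z_0$ and a common neighbour in $H$). This bounds the independent sets of $S$ that are ``essential'' and yields $g(t)$ in terms of $\mu$.
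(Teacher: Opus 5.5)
There is a genuine gap, and it sits where you suspect (Steps 1--2). It is not just a missing detail: the claim that a large independent set $Y\subseteq N(z_0)\cap N(D)$ must, via \zcref{lem:conntw15}, yield an induced $P_6$ or $K_{2,t}$ is false. You account for the path, star and subdivided star/comb outcomes, but not the line-graph-of-a-star outcome, namely a clique $K\subseteq D$ whose vertices have private neighbours in $Y'$. Together with $z_0$ this is a $t$-pyramid with apex $z_0$, and $t$-pyramids are themselves $\{P_6,K_{2,t}\}$-free; this is exactly the case that survives in \zcref{lem:pyramids-exist-1}.

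Concretely, let $G$ be an $n$-pyramid with apex $z_0$ and weight spread evenly over the base. Then $D$ is the base, $N(D)=\{x_1,\dots,x_n\}$ is independent, and every $S'\subseteq N(z_0)$ separating $z_0$ from base vertices of total weight at least $1/8$ contains at least $n/8$ of the $x_i$. So neither the replacement in Step 1 nor a bound on ``essential'' independent subsets of $S$ in Step 2 is available. \zcref{lem:small-alpha-separator} cannot substitute, since it separates two fixed vertices rather than $z_0$ from all of $D$. Step 3 has the same problem: nothing forces $t$ independent vertices of $S$ to have a second common neighbour besides $z_0$.

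Your minimisation scheme also does not close as stated. Every move $z_0\to z_1$ adds a bounded-$\alpha$ set, so the budget grows, and minimality over pairs with $\alpha(Z)\le g(t)$ is never contradicted unless you show the descent stops after $O(1)$ steps.

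The paper supplies both missing ingredients. First, it reduces to $\mathsf{w}(N[v])\le 1/8$ for all $v$ (otherwise $N[v]$ alone works), so no heavy component is dominated by one vertex. Hence a chain $a\,z_0\,z_1\,z_2\,Q\,w$ descending into nested heavy components is an induced $P_6$, which caps the descent at two steps.

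Second, instead of excluding pyramids it exploits them extremally. If some minimal separator has $\alpha\ge f(t)$ (otherwise \zcref{cor:minimalseparatorboundedtreealpha} finishes), \zcref{lem:pyramids-exist-1} gives a pyramid, and one fixes a pyramid $\Pi$ whose apex $a$ maximises $\alpha(N(a))$. Let $Z$ come from \zcref{lem:simplicial-pyramid-0}, so $\alpha(Z)\le 12(t-1)$. The component $G'$ of $G-Z$ containing $\Pi$ lies in $N[V(\Pi)]$, which has weight at most $7\cdot 1/8$; this is where the constant $7/8$ comes from. So if $Z$ is not already balanced, the heavy component $C$ of $G-Z$ lies outside $G'$, and one takes $z_0\in Z\cap N(C)$. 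Assuming $N[z_0]\cup Z$ is not balanced, $P_6$-freeness gives $V(G')\subseteq N(z_0)$. Set $Z_1=N(z_0)\cap C\cap N(C')$, where $C'$ is the heavy component of $G-(N[z_0]\cup Z)$.

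If $\alpha(Z_1)$ is small, $N[z_1]\cup Z\cup Z_1$ is balanced for any $z_1\in Z_1$, by the $P_6$ argument above. If $\alpha(Z_1)$ is large, $Z_1$ is a minimal separator between $z_0$ and $C'$, so \zcref{lem:pyramids-exist-1} produces a pyramid with apex $z_0$. But $\alpha(N(z_0))\ge\alpha(N(a))-\alpha(Z)+\alpha(Z_1)>\alpha(N(a))$, contradicting the choice of $\Pi$. A potential of this kind, which strictly increases precisely when a large independent attachment appears, is the idea your outline lacks.
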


Before proving these two lemmas in the next section, 
let us give a formal proof of our main theorem using them. 

\mainthm*

\begin{proof}
    Using \zcref{lem:small-alpha-separator,lem:balanced-separator-closed-neighborhood} as well as \zcref{lem:balanced-separator-small-alpha}, for every~${\{P_6,K_{2,t}\}}$\nobreakdash-free graph~$G'$ and every normal weighting~$\mathsf{w}'$ there exists a $(\mathsf{w}',\nicefrac{7}{8})$-balanced separator~$S$ in~$G'$ such that~${\alpha(X) \leq 14 (t-1) + 3 + 2 g(t) + 4}$ where~$g$ denotes the function from \zcref{lem:balanced-separator-closed-neighborhood}. 
    
    Now, we can apply \zcref{lem:small-tree-alpha} to conclude that every~${\{P_6,K_{2,t}\}}$\nobreakdash-free graph has tree-independence number at most~${17(14(t-1) + 2 g(t) + 7) = 238(t-1) + 34 g(t) + 119}$. 
\end{proof}

\section{\texorpdfstring{%
Pyramids in $\{P_6,K_{2,t}\}$-free graphs}%
{Pyramids in { P\_6 , K\_2,t }-free graphs}}
\label{sec:pyramids}

Before proving \zcref{lem:small-alpha-separator,lem:balanced-separator-closed-neighborhood}, let us talk about particular induced subgraphs called pyramids. 
More precisely, in \zcref{subsec:pyramid-structure} we show why pyramids are useful towards our goal, and in \zcref{subsec:pyramid-finding} we show how to find them.

For an integer~${t \geq 2}$, a \emph{$t$-pyramid} is a graph~$\Pi$ on~${2t+1}$ vertices~$\{a, x_1, \dots, x_t, y_1, \dots, y_t\}$ with edge set ${\{ a x_i \colon i \in [t] \} \cup \{ x_i y_i \colon i \in [t] \} \cup \{ y_i y_j \colon i,j \in [t],\ i \neq j \}}$. 
To indicate the names of the vertices, we call the tuple ${(a,x_1,\dots,x_t,y_1,\dots,y_t)}$ the \emph{presentation} of~$\Pi$. 
Given a presentation ${(a,x_1,\dots,x_t,y_1,\dots,y_t)}$ of a pyramid $\Pi$, the vertex $a$ is said to be the \emph{apex} of~$\Pi$, and $\{y_1, \dots, y_t\}$ is its \emph{base}.
A \emph{pyramid} is a $3$-pyramid. 

Let~$G$ be a graph and let~${\Pi \inducedsubgraph G}$ be a pyramid. 
We say that~$\Pi$ is \emph{simplicial in~$G$} if for every~${v \in V(G) \setminus V(\Pi)}$ the neighborhood of~$v$ in~$\Pi$ is a non-empty clique. 
Moreover, we call a vertex in~${V(G) \setminus V(\Pi)}$ \emph{basic} (with respect to $\Pi$, or \emph{$\Pi$-basic} for short) if its neighbourhood in~$\Pi$ is exactly the base of~$\Pi$. 

\subsection{\texorpdfstring{%
Structure of pyramids in $\{P_6,K_{2,t}\}$-free graphs}%
{Structure of pyramids in { P\_6 , K\_2,t }-free graphs}}\label{subsec:pyramid-structure}

First, we show that if we have a pyramid~$\Pi$ in a $\{P_6,K_{2,t}\}$-free graph, then deleting some set with small independence number yields a component  in which~$\Pi$ is simplicial and every non-basic vertex is a neighbour of the apex. 

\begin{lemma}
    \label{lem:simplicial-pyramid-0}
     Let~$G$ be a $P_6$-free graph and let~${\Pi \inducedsubgraph G}$ be a pyramid with presentation~${(a,x_1,x_2,x_3,y_1,y_2,y_3)}$.    
     Let~${X \coloneqq \{x_1,x_2,x_3\}}$ and~${Y \coloneqq \{y_1,y_2,y_3\}}$ and let  \begin{itemize}
        \item ${Z \coloneqq \{ z \in N_G(V(\Pi)) \colon N_G(z) \cap V(\Pi) \textnormal{ is not a clique}\}}$. 
     \end{itemize}
     Then~$\Pi$ is simplicial in the component~$G'$ of~${G-Z}$ that contains~$\Pi$, and with the sets
     \begin{itemize}
        \item ${A \coloneqq N_{G'}(a) \setminus V(\Pi)}$ and
        \item ${B \coloneqq \{ b \in V(G') \setminus V(\Pi) \colon N_{G'}(b) \cap V(\Pi) = Y \}}$ (that is, $B$ is the set of $\Pi$-basic vertices in $G'$),
    \end{itemize}
    we have that~${\{A,B,V(\Pi)\}}$ is a partition of~$V(G')$. 
     
     Moreover, if~$G$ is~$K_{2,t}$-free for some integer~${t \geq 2}$, then~${\alpha(Z) \leq 12(t-1)}$.
\end{lemma}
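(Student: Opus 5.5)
The statement has two parts: the $\alpha(Z)$ bound, which is a short counting argument, and the structural claim about~$G'$, which is a case analysis driven by $P_6$-freeness. I would do the bound first.

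\textbf{The bound on $\alpha(Z)$.} The pyramid~$\Pi$ has $7$ vertices and $9$ edges: the three edges $ax_i$, the three edges $x_iy_i$, and the triangle on~$Y$. Hence it has exactly $\binom{7}{2}-9=12$ pairs of nonadjacent vertices. Every $z\in Z$ has a non-clique neighbourhood in~$\Pi$, so $z$ is adjacent to both ends of at least one such pair $\{p,q\}$. Assign each $z$ to one such pair; this splits $Z$ into $12$ classes.

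Let $I$ be an independent set inside one class, with pair $\{p,q\}$. Every vertex of~$I$ is adjacent to both $p$ and $q$, and $p,q$ are nonadjacent and lie outside~$Z$. So if $\abs{I}\ge t$, then $\{p,q\}$ together with any $t$ vertices of~$I$ induces $K_{2,t}$. Thus each class has independence number at most $t-1$, and subadditivity of~$\alpha$ over the $12$ classes gives $\alpha(Z)\le 12(t-1)$.

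\textbf{The structural part.} Every vertex of $G'-V(\Pi)$ has a clique neighbourhood in~$\Pi$, possibly empty. This holds because $G'$ avoids~$Z$, and $Z$ contains every vertex of $N_G(V(\Pi))$ whose attachment to~$\Pi$ is not a clique. It therefore suffices to show the following: every $v\in V(G')\setminus V(\Pi)$ either is adjacent to~$a$ or has $N(v)\cap V(\Pi)=Y$. This gives both simpliciality and the partition $\{A,B,V(\Pi)\}$.

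\textbf{Excluding bad cliques.} The cliques of~$\Pi$ are $\emptyset$, single vertices, the edges $ax_i$ and $x_iy_i$, and the subsets of~$Y$. I would go through the attachments that avoid~$a$ and are not all of~$Y$, and exhibit an induced $P_6$ for each, using the symmetry of the three legs.
\begin{itemize}
\item Attachment $\{x_1\}$: the path $v\,x_1\,a\,x_2\,y_2\,y_3$ is induced.
\item Attachment $\{x_1,y_1\}$: the path $v\,x_1\,a\,x_2\,y_2\,y_3$ is again induced, since $v$ is nonadjacent to $y_2,y_3$ and $y_1$ is not on the path.
\item Attachment $\{y_1\}$: the path $v\,y_1\,y_2\,x_2\,a\,x_3$ is induced.
\end{itemize}
The attachment $\{y_1,y_2\}$ is the delicate case. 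Here $v\,y_1\,x_1\,a\,x_2\,y_2$ is an induced $C_6$, and the obvious extensions through $x_3$ or $y_3$ acquire chords through the base triangle. I would try choosing paths that enter the base only once, for instance starting at~$v$ and leaving through~$y_1$ and the leg at~$x_1$. If no such path works, I would use connectivity of~$G'$ and the existence of a further neighbour of~$v$ outside~$\Pi$ to build the $P_6$.

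\textbf{Excluding empty attachments.} Suppose some vertex of~$G'$ has no neighbour in~$\Pi$. Since $G'$ is connected, there is such a vertex~$u$ with a neighbour~$w$ whose attachment is allowed: either $w\in A$, or $w$ is basic. If $w$ is adjacent to~$a$ only, then $u\,w\,a\,x_1\,y_1\,y_2$ is an induced $P_6$. If $w$ is adjacent to $\{a,x_1\}$, then $u\,w\,a\,x_2\,y_2\,y_3$ is induced. If $w$ is basic, a path such as $u\,w\,y_1\,x_1\,a\,x_2$ has the chord $wy_2$-free problem of $w$ seeing all of~$Y$, so I would instead look for a path of the form $u\,w\,y_i\,x_i\,a$ extended without revisiting~$Y$, using a further neighbour of~$u$ if necessary.

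\textbf{Main obstacle.} I expect the hardest steps to be the attachment $\{y_1,y_2\}$ and the basic neighbour~$w$ in the empty-attachment case. In both, the base triangle creates chords in the natural candidate paths, and the argument may need an extra vertex beyond~$\Pi$.
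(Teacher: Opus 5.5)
Your counting bound on $\alpha(Z)$ is the paper's argument. Your reduction of the structural part also matches the paper's claims about the leftover set $C$: every vertex of $G'-V(\Pi)$ has a clique attachment, so it suffices to rule out nonempty attachments avoiding $a$ other than $Y$, and then empty attachments via a neighbour $w$ with an allowed attachment. But as written the proposal does not prove the lemma. The two cases you single out as the main obstacle are left open: the attachment $N(v)\cap V(\Pi)=\{y_1,y_2\}$, and a vertex $u$ with no neighbour in $\Pi$ adjacent to a basic $w$. The suggested fallback, using connectivity and ``a further neighbour of $v$ outside $\Pi$'', is not an argument, since such a neighbour need not exist.

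Both cases are in fact one-liners, and the chords you worry about do not occur.

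For the attachment $\{y_1,y_2\}$, leave the base through the non-neighbour $y_3$. The path $v\,y_1\,y_3\,x_3\,a\,x_2$ is an induced $P_6$: $v$ sees only $y_1$ on it, $y_1$ sees only $v$ and $y_3$, and $y_2$ is not on the path. This is just your $\{y_1\}$ pattern with the indices chosen so that the second base vertex is a non-neighbour of $v$. The paper treats every nonempty attachment contained in $Y$ and different from $Y$ this way, via $v\,y_i\,y_k\,x_k\,a\,x_j$ with $y_i\in N(v)$, $y_k\notin N(v)$ and $\{i,j,k\}=\{1,2,3\}$. Incidentally, $v\,y_1\,x_1\,a\,x_2\,y_2$ is not an induced $C_6$, since $y_1y_2$ is an edge.

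For the basic $w$, the path $u\,w\,y_1\,x_1\,a\,x_2$ that you wrote down is already induced. The adjacency of $w$ to $y_2,y_3$ is irrelevant because neither lies on the path. Also, $w$ is nonadjacent to $x_1,a,x_2$ because its attachment is exactly $Y$, and $u$ has no neighbour in $\Pi$.

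With these two paths inserted, your proof is complete and essentially identical to the paper's.
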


\begin{proof}
    Let~${C \coloneqq V(G-Z) \setminus (A \cup B \cup V(\Pi))}$. 
    By the definition of~$Z$, for every~${v \in C}$, we have that~${N_{G}(v) \cap V(\Pi)}$ is a (possibly  empty) a clique. 
    To prove that~$\Pi$ is simplicial in~$G'$, it suffices to show that~$C$ is disjoint from~$V(G')$, since then~$\{A,B\}$ is a partition of~${N_{G'}(V(\Pi))}$. 

    \begin{claim}
        \label{clm:anticompletePi}
        ${N_G(C) \cap V(\Pi)}$ is empty. 
    \end{claim}

    \begin{subproof}
        For contradiction, assume that~${vw \in E(G)}$ with~${v \in C}$ and~${w \in V(\Pi)}$. 
        Recall that~${N_{G}(v) \cap V(\Pi)}$ is a clique by the definition of~$Z$, and it is non-empty, since ${w \in N_G(v) \cap V(\Pi)}$.
        
        By definition of~$A$, we conclude that~${a \notin N_{G}(v)}$, so~${w \neq a}$. 
        
        For all permutations~$(i,j,k)$ of~$\{1,2,3\}$, if~${x_i \in N_{G}(v)}$, then $N_{G}(v) \cap V(\Pi)$ is a clique containing $x_i$, which implies that~${y_{j},y_{k},x_{j},x_{k} \notin N_{G}(v)}$ and, hence, ${v x_i a x_j y_j y_k}$ is an induced path of length~$5$ in $G$, a contradiction. 
        Therefore, ${N_G(v)}$ is disjoint from $X$; in particular,~${w \notin X}$. 
        
        Hence,~${w \in Y}$, say~${w = y_1}$. 
        Since~$Y$ is a maximal clique in~$\Pi$, by definition of~$B$ and the fact that $v\notin B$, we conclude that~${Y \nsubseteq N_{G}(v)}$, say~${y_2 \notin N_G(v)}$. 
        But then, ${v y_1 y_2 x_2 a x_3}$ is an induced path of length~$5$ in $G$, a contradiction. 
    \end{subproof}

    \begin{claim}
        \label{clm:anticompleteA}
        ${N_G(C) \cap A}$ is empty. 
    \end{claim}

    \begin{subproof}
        For a contradiction, assume that~${vw \in E(G)}$ with~${v \in C}$ and~${w \in A}$.
        Since $w$ belongs to $A$, it does not belong to $Z$, hence, the set ${N_{G}(w) \cap V(\Pi)}$ is a non-empty clique.
        In particular, we have~${X \nsubseteq N_{G'}(w)}$, say~${x_1 \notin N_{G'}(w)}$. 
       For the same reason, $w$ is not adjacent to any vertex in $Y$. 
       Then, with \zcref{clm:anticompletePi}, the path~${v w a x_1 y_1 y_2}$ is an induced path of length~$5$ in $G$, a contradiction. 
    \end{subproof}

    \begin{claim}
        \label{clm:anticompleteB}
        ${N_G(C) \cap B}$ is empty. 
    \end{claim}

    \begin{subproof}
        For contradiction, assume that~${vw \in E(G)}$ with~${v \in C}$ and~${w \in B}$. 
        Then, it holds that ${N_{G}(w) \cap V(\Pi) = Y}$.
        With \zcref{clm:anticompletePi,clm:anticompleteA}, the path~${v w y_1 x_1 a x_2}$ is an induced path of length~$5$ in $G$, a contradiction. 
    \end{subproof}

    By \zcref{clm:anticompletePi,clm:anticompleteA,clm:anticompleteB}, ${N_G(C) \subseteq Z}$. 
    Hence, ${C \cap V(G') = \emptyset}$ and indeed~$\Pi$ is simplicial in~$G'$. 
    Moreover, ${V(G') = A \cup B \cup V(\Pi)}$, and the sets~$A$, $B$ and~$V(\Pi)$ are pairwise disjoint, so~${\{A,B,V(\Pi)\}}$ is indeed a partition of~$V(G')$. 

    What is left to show is the `moreover' part of the statement. 
    So assume~$G$ is $K_{2,t}$-free for some integer~${t \geq 2}$. 
    \begin{claim}
        \label{clm:Zsmallalpha}
        ${\alpha(Z) \leq 12 (t-1)}$ . 
    \end{claim}

    \begin{subproof}
        Let~$\overline{\Pi}$ denote the complement of~$\Pi$. 
        Note that~$\overline{\Pi}$ has~${\binom{7}{2}-9 = 12}$ edges. 
        For each~${uv \in E(\overline{\Pi})}$, let~${Z_{uv} \coloneqq \{ z \in Z \colon \{u,v\} \subseteq N_G(z) \}}$. 
        For every independent set~${I \subseteq Z_{uv}}$, the graph~${G[I \cup \{u,v\}]}$ is isomorphic to~$K_{2,\abs{I}}$, so~${\alpha(Z_{uv}) \leq t-1}$, since $G$ is  $K_{2,t}$-free. 
        
        By the definition of~$Z$, we have~${\bigcup_{uv \in E(\overline{\Pi})} Z_{uv} = Z}$, and hence
        \[
            \alpha(Z) \leq \abs{E(\overline{\Pi})} (t-1) \leq 12(t-1). \qedhere
        \]
    \end{subproof}
    This concludes the proof of the lemma. 
\end{proof}

Next we show that if we have a simplicial pyramid~$\Pi$ in a $\{P_6,K_{2,t}\}$-free graph, then every basic vertex can be separated from the apex by a set with small independence number. 

\begin{lemma}
    \label{lem:simplicial-pyramid-2}
    Let~${t \geq 3}$, let~$G$ be a ${\{P_6,K_{2,t}\}}$-free graph, and let~$\Pi$ be a pyramid with apex~$a$ such that $\Pi$ is simplicial in~$G$. 
    Then for every $\Pi$-basic vertex~${b \in V(G)\setminus V(\Pi)}$ there exists an $(a,b)$-separator~$S$ in~$G$ with~$\alpha(S) \leq 2(t-1) + 3$. 
\end{lemma}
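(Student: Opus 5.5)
The plan is to take a separator of the form $S \coloneqq Y \cup (N(a)\cap N(b)) \cup T$, where $Y=\{y_1,y_2,y_3\}$ is the base of~$\Pi$. Here $T$ is a set of vertices that all share two fixed nonadjacent neighbours. Since $a$ and $b$ are nonadjacent ($b$ is $\Pi$-basic), $K_{2,t}$-freeness gives $\alpha(N(a)\cap N(b))\le t-1$. Likewise $\alpha(T)\le t-1$ whenever $T$ is contained in the common neighbourhood of a nonadjacent pair. This yields the claimed bound $\alpha(S)\le 2(t-1)+3$.

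First I would record the local structure forced by simpliciality. Every vertex $v\notin V(\Pi)$ has as neighbourhood in~$\Pi$ a nonempty clique. So $N(v)\cap V(\Pi)$ is contained in one of $\{a,x_i\}$, $\{x_i,y_i\}$, or $Y$. In particular, a neighbour of $a$ outside $\Pi$ has no neighbour in $Y$ and at most one neighbour in $\{x_1,x_2,x_3\}$. Also, $b$ sees all of $Y$ and nothing else of~$\Pi$.

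Next, suppose for a contradiction that $G-S$ contains an $(a,b)$-path, and let $P=a\,p_1\dots p_k\,b$ be a shortest one, hence induced. Since $G$ is $P_6$-free, $k\le 3$, and $k\ge 2$ because the common neighbours of $a$ and $b$ lie in~$S$. The idea is to extend $P$ by vertices of $\Pi$ to produce an induced $P_6$.

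For $k=3$, I would take some $i$ with $x_i$ nonadjacent to $p_1$ (possible since $p_1$ sees at most one $x_j$), and consider the walk $x_i\,a\,p_1\,p_2\,p_3\,b$. This is an induced $P_6$ unless $x_i$ has a neighbour among $p_2,p_3$. Since the $p_j$ avoid $Y$, I would exploit the freedom in choosing $i$ among three indices, together with the clique condition on the $p_j$'s, to rule those adjacencies out. This is where I expect $t\ge 3$ (really the three branches of $\Pi$) to be used.

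For $k=2$, the path $a\,p_1\,p_2\,b$ must be extended at both ends. On the $b$-side, I would append $y_j$ and then $x_j$ (or $y_j$, $y_\ell$): $b$ is adjacent to $Y$, and $p_1,p_2\notin Y$. This gives, for instance, $x_i\,a\,p_1\,p_2\,b\,y_j$ or $a\,p_1\,p_2\,b\,y_j\,x_j$, and the chords to be excluded are again adjacencies between the $p$'s and $\Pi$. Paths whose $p_2$ sees some $x_j$ or some $y_j$ cannot be killed this way. Those are exactly the vertices I would put into $T$: for instance, take $T$ to be the common neighbours of $b$ and a fixed vertex (such as $x_1$, or via symmetry some fixed nonneighbour of~$b$) that are adjacent to something in $N(a)$. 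That keeps $\alpha(T)\le t-1$ by $K_{2,t}$-freeness.

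The main obstacle is choosing $T$ so that a single nonadjacent pair controls all exceptional configurations in the $k=2$ case. A priori, the bad vertices $p_2$ could be attached to different $x_i$'s, which would suggest three groups and a bound of $3(t-1)$. To get down to one group, I would first try to show via $P_6$-freeness that if $p_2$ and $p_2'$ are attached to different branches, then one of them yields an induced $P_6$ through the other branch. Then all surviving bad vertices sit on one branch, and $T=N(b)\cap N(x_i)$ for that single $i$ suffices.
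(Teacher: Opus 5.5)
\textbf{There is a genuine gap.} Your framework matches the paper's: the base, plus $N(a)\cap N(b)$, plus a residual set $T$, tested against a shortest $(a,b)$-path $a\,p_1\cdots p_k\,b$ with $k\le 3$. Omitting $x_1,x_2,x_3$ from $S$ is also harmless. The gap is in what $T$ must contain and why $\alpha(T)\le t-1$.

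\textbf{The missing structural fact.} Let $v\notin V(\Pi)$ be nonadjacent to $a$. Then $v$ sees no $x_j$, since otherwise $v\,x_j\,a\,x_k\,y_k\,y_\ell$ is an induced $P_6$. And if $v\sim y_1$ but $v\not\sim y_2$, then $v\,y_1\,y_2\,x_2\,a\,x_3$ is an induced $P_6$. So every vertex outside $\Pi$ is in $N(a)$ or is $\Pi$-basic; this is the partition $\{A,B,V(\Pi)\}$ with $Z=\emptyset$.

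This fact, not the ``freedom in choosing $i$'', settles $k=3$. Choosing $i$ cannot handle $p_1,p_2,p_3$ seeing three different $x_j$'s; the partition shows $p_2,p_3$ are basic. For $k=2$ it shows that $p_1\in N(a)\setminus N(b)$ lies outside $\Pi$ and $p_2$ is a \emph{basic} neighbour of $b$. So the exceptional vertices see all of $Y$ and none of $x_1,x_2,x_3$. Your $T=N(b)\cap N(x_i)$ therefore contains none of them, and the ``one branch'' reduction addresses a configuration that cannot occur.

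Nor can a different fixed nonneighbour of $b$ replace $x_i$. Take $\Pi$, make $\{y_1,y_2,y_3,b,p,p'\}$ a clique, and add $v,v'$ adjacent to $a$ with $v\sim p$, $v'\sim p'$ and no other edges. This graph is $\{P_6,K_{2,2}\}$-free, $\Pi$ is simplicial in it, and $b$ is basic. The path $a\,v\,p\,b$ avoids $Y\cup(N(a)\cap N(b))$, and no nonneighbour of $b$ is adjacent to both $p$ and $p'$.

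\textbf{The fix.} What you are missing is a bound on $J$, the set of basic neighbours of $b$ having a neighbour in $N(a)\setminus N(b)$. The paper's bound uses $P_6$-freeness and lets the controlling pair depend on the independent set:
\begin{itemize}
\item Let $I\subseteq J$ be independent and $u\in I$.
\item Let $v\in N(a)\setminus N(b)$ be a neighbour of $u$. Since $u$ is basic, $v\notin V(\Pi)$, so $v$ sees at most one $x_j$.
\item Choose $x\in\{x_1,x_2,x_3\}$ nonadjacent to $v$.
\item If some $w\in I$ were nonadjacent to $v$, then $w\,b\,u\,v\,a\,x$ would be an induced $P_6$.
\item Hence $I\subseteq N(b)\cap N(v)$ with $b\not\sim v$, and $K_{2,t}$-freeness gives $|I|\le t-1$.
\end{itemize}
Take $T=J$; this is the paper's set $N(b)\cap B\cap N(A\setminus N(b))$. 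Then your separator $Y\cup(N(a)\cap N(b))\cup T$ works and even has independence number at most $2(t-1)+1$. The paper uses $V(\Pi-a)$ instead of $Y$, which costs $3$ instead of $1$ but is otherwise the same argument.
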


\begin{proof}
    Let~$Y$ be the base of~$\Pi$ and let~${X \coloneqq V(\Pi) \setminus Y \cup \{a\}}$. 
    By \zcref{lem:simplicial-pyramid-0}, with
    \begin{itemize}
        \item ${A \coloneqq N_G(a) \setminus V(\Pi)}$ and
        \item ${B \coloneqq \{ b \in V(G) \setminus V(\Pi) \colon N_G(b) \cap V(\Pi) = Y \}}$, 
    \end{itemize}
    we observe that~${\{A,B,V(\Pi)\}}$ is a partition of~$V(G)$. 
    We claim that for every~${b \in B}$, the set 
    \begin{itemize}
        \item $S \coloneqq V(\Pi - a) \cup (N_G(b) \cap A) \cup (N_G(b) \cap B \cap N_G(A \setminus N_G(b)))$
    \end{itemize}
    is an $(a,b)$-separator of~$G$ with~${\alpha(S) \leq 2(t-1)+3}$. 

    Let~$P$ be an induced $(a,b)$-path in~${G - V(\Pi-a)}$. 
    Since~$N_G(a) \setminus V(\Pi) = A$, we conclude that~$P$ contains exactly one vertex in~$A$. 
    Note that this vertex is non-adjacent to some~${x \in X}$ since~$\Pi$ is simplicial in~$G$. 
    Then since~${x a P b}$ is an induced path longer than~$P$, we observe that~$P$ has length at most~${3}$, so~$P$ contains at most two vertices from~${B}$, one of which has to be~$b$. 
    If~$V(P) \cap B = \{b\}$, then the neighbor of~$b$ on~$P$ is contained in~${N_G(b) \cap A}$. 
    If~$V(P) \cap B = \{b,b'\}$ for some~${b' \in B}\setminus{b}$, then~${b' \in N_G(b) \cap B \cap N_G(A \setminus N_G(b))}$. 
    So $S$ is indeed an $(a,b)$-separator of~$G$. 

    For every independent set~${I \subseteq N_G(b) \cap A}$ we have that~$G[I \cup \{a,b\}]$ is isomorphic to~$K_{2,\abs{I}}$, so ${\alpha(N_G(b) \cap A) \leq t-1}$. 
    Let~${J \subseteq N_G(b) \cap B \cap N_G(A \setminus N_G(b))}$ be an independent set, and assume for a contradiction that~$J$ has size~$t$. 
    Let~${u \in J}$ and let~${v \in N_G(u) \cap (A \setminus N_G(b))}$.
    Note that~$v$ is non-adjacent to at least one vertex  in~$J$ since otherwise~${G[J \cup \{b,v\}]}$ would be isomorphic to~$K_{2,t}$.  
    Let ${w \in J}$ be such that~${w \notin N_G(v)}$. 
    Moreover, $u$ is non adjacent to some~${x \in X}$ since~$\Pi$ is simplicial in~$G$.
    But then, ${w b u v a x}$ is an induced path of length~$5$ in $G$, a contradiction. 
    So~${\alpha(N_G(b) \cap B \cap N_G(A \setminus N_G(b))) \leq t-1}$. 
    In total, we have 
    \[
        \alpha(S) \leq \alpha(\Pi-a) + \alpha(N_G(b) \cap A) + \alpha(N_G(b) \cap B \cap N_G(A \setminus N_G(b))) \leq 3 + 2(t-1)\,,
    \]
as claimed.
\end{proof}

\subsection{\texorpdfstring{%
Existence of pyramids in $\{P_6,K_{2,t}\}$-free graphs}%
{Existence of pyramids in { P\_6 , K\_2,t }-free graphs}}\label{subsec:pyramid-finding}

Our next lemma shows that, given two nonadjacent vertices~$a$ and~$b$ in a $\{P_6,K_{2,t}\}$-free graph, if we do not already find an $(a,b)$-separator with small independence number (as desired for \zcref{lem:small-alpha-separator}) in the neighbourhood of~$a$, then we find a pyramid such that~$a$ is its apex and~$b$ is basic. 

\begin{lemma}
    \label{lem:pyramids-exist-2}
    Let~${t \geq 2}$ be an integer and let~$G$ be a ${\{P_6,K_{2,t}\}}$-free graph containing two nonadjacent vertices~$a$ and~$b$ such that
    \begin{itemize}
        \item $N_G(a)$ is an independent set of size at least~$3(t-1)+1$ and 
        \item every vertex in~${N_G(a)}$ lies on some induced $(a,b)$-path.
    \end{itemize}
    Then, $G$ contains a pyramid~$\Pi$ with apex~$a$ as an induced subgraph such that $b$ is $\Pi$-basic. 
\end{lemma}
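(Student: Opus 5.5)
The plan is to build the pyramid directly from short induced $(a,b)$-paths through distinct neighbours of~$a$. Since $G$ is $P_6$-free, every induced $(a,b)$-path has at most five vertices. Also, $a$ and $b$ are nonadjacent, and every independent set of common neighbours of two nonadjacent vertices has size at most $t-1$ (otherwise we get an induced $K_{2,t}$).

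\emph{Step 1 (discard neighbours of $b$).} Because $N_G(a)$ is independent, $N_G(a)\cap N_G(b)$ is an independent set of common neighbours of $a$ and $b$. Hence it has at most $t-1$ vertices. This leaves a set $X'\subseteq N_G(a)\setminus N_G(b)$ with $|X'|\ge 2(t-1)+1$. Each $x\in X'$ lies on an induced $(a,b)$-path of length~$3$ or~$4$.

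\emph{Step 2 (get length-$3$ paths).} I would first show that most $x\in X'$ have a neighbour in $N_G(b)\setminus N_G(a)$, i.e.\ lie on an induced path $axyb$. Suppose instead that $x$ only lies on paths $axyzb$ with $yb\notin E(G)$. Take a second vertex $x'\in X'$ with its path $P'$. Consider the walk obtained by going from $b$ along $P$ reversed to $x$, then to $a$, then along $P'$ to $x'$ and beyond. Since $N_G(a)$ is independent, $x$ and $x'$ are nonadjacent, so some six consecutive vertices of this walk form an induced $P_6$ unless there are specific chords between $\{y,z\}$ and $P'$. I would do a short case analysis on these chords. In each case the chords should yield a vertex of $N_G(x)\cap N_G(b)$, or else a $P_6$. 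I expect this step to be the main obstacle, because the case analysis on chords between two length-$4$ paths is where the argument can branch. If it becomes unwieldy, the fallback is to use the $K_{2,t}$ bound again. All but at most $t-1$ vertices of $X'$ must then have such a neighbour, which is still consistent with the final count below.

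\emph{Step 3 (assemble the pyramid).} For each remaining $x$ choose $y_x\in N_G(x)\cap N_G(b)$. Each $y_x$ is nonadjacent to $a$, since $y_x\notin N_G(a)$ on the induced path $axyb$. So the $x$'s adjacent to a fixed $y$ form an independent set of common neighbours of $a$ and $y$, of size at most $t-1$. A greedy choice then yields $x_1,x_2,x_3$ with $y_i\coloneqq y_{x_i}$ such that $x_i y_j\notin E(G)$ for $i\ne j$. Here the bound $3(t-1)+1$ is what makes this possible after the losses in Steps~1 and~2, with the accounting of Step~2 chosen so the totals fit.

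It remains to check that $y_1y_2y_3$ is a triangle. Suppose $y_iy_j\notin E(G)$. Then $x_k\,a\,x_i\,y_i\,b\,y_j$ is an induced $P_6$, because $x_k$ is nonadjacent to $x_i$, $y_i$, $y_j$ and $b$. This is a contradiction. Hence $(a,x_1,x_2,x_3,y_1,y_2,y_3)$ is the presentation of an induced pyramid with apex~$a$. Finally, $b$ is adjacent to all of $y_1,y_2,y_3$ and to none of $a,x_1,x_2,x_3$, so $b$ is $\Pi$-basic.
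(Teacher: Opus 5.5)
Your outline matches the paper's: discard the common neighbours of $a$ and $b$, rule out induced $(a,b)$-paths of length~$4$, extract three pairs $x_iy_i$ with no cross edges, and force $y_1y_2y_3$ to be a triangle. Step~1 and your final $P_6$ $x_k\,a\,x_i\,y_i\,b\,y_j$ are correct. The two middle steps, however, are not established.

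\textbf{Step 2.} You already have the right six vertices, but you aim at the wrong conclusion: a chord from $x'$ to $y$ or $z$ does not produce a neighbour of $x$ in $N_G(b)$. What closes the step is a count. Suppose $axyzb$ is an induced path. Every $x'\in X'\setminus\{x\}$ is nonadjacent to $x$ and to $b$, so it must be adjacent to $y$ or $z$, since otherwise $x'axyzb$ is an induced $P_6$. Since $y,z\notin N_G(a)$, each of them has at most $t-1$ neighbours in the independent set $X'$, and $y$ already sees $x$. Hence $|X'|\le 2(t-1)$, a contradiction. So no induced $(a,b)$-path of length~$4$ exists at all; the paper runs the same count over all of $N_G(a)$, using $v,w,b$. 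Your fallback of discarding another $t-1$ vertices is not consistent with the final count. It leaves only $t$ vertices, and $t-1$ of those may share a single $y$, so three cross-free pairs are no longer forced.

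\textbf{Step 3.} The greedy choice is not justified. $K_{2,t}$-freeness bounds only one direction: a fixed $y$ sees at most $t-1$ vertices of $X'$. Nothing bounds how many of the chosen $y_{x'}$ a given $x$ sees, because $x$ and $b$ are nonadjacent but their common neighbours may form a clique. For $t\ge 3$ there are graphs satisfying the hypotheses, with $\{b\}\cup\{y_x\}$ a clique, in which the assignment $x\mapsto y_x$ defeats every choice. In them, the relation ``$x$ is adjacent to $y_{x'}$'' on $X'$ is a regular tournament on $2t-3$ vertices plus a single arc on the other two. Then every three vertices of $X'$ contain $x\neq x'$ with $xy_{x'}\in E(G)$, even though the lemma holds.

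The missing idea is the paper's private-neighbour argument:
\begin{enumerate}
\item Take an inclusion-minimal $Y'\subseteq N_G(b)\setminus N_G(a)$ such that every vertex of $X'$ has a neighbour in $Y'$.
\item By minimality, each $y\in Y'$ has a neighbour in $X'$ that has no other neighbour in $Y'$.
\item Each $y$ covers at most $t-1$ vertices and $|X'|\ge 2(t-1)+1$, so $|Y'|\ge 3$.
\end{enumerate}
Any three vertices of $Y'$ together with their private neighbours give the pairs $x_iy_i$ your triangle argument needs.
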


\begin{proof}
    We define
    \begin{itemize}
        \item $\mathcal{P}$ to be the set of all induced $(a,b)$-paths in~$G$, 
        % \item ${X \coloneqq \{ x \in N_G(a) \cap V(P) \colon P \in \mathcal{P},\, \abs{E(P)} = 3 \}}$, and 
        \item $X$ to be the set of neighbors of~$a$ such that there exists a path~${P \in \mathcal{P}}$ of length~$3$ with~${a \in P}$, and
        \item $Y$ to be the set of all vertices~$y$ such that there exists a path~${P \in \mathcal{P}}$ and an~${x \in X}$ with~${y \in N_P(x) \setminus \{a\}}$. 
    \end{itemize}
    Since $G$ is $P_6$-free, each path in $\mathcal{P}$ has length at most $4$. 
    In fact, as we show next, each path in $\mathcal{P}$ has length at most $3$. 
    Suppose for a contradiction that there exists a path~${a u v w b \in \mathcal{P}}$ of length $4$.
    Then, adding a vertex in~${N_G(a) \setminus \{u\}}$ to this path yields a (not necessarily induced) path of length $5$.
    Since~$G$ is~$P_6$-free, each vertex in~${N_G(a) \setminus \{u\}}$ is adjacent to one of~$v$, $w$, or~$b$. 
    But each of those $3$ vertices has at most $t-1$ neighbours in~${N_G(a)}$, as otherwise~$G$ would contain a~$K_{2,t}$. 
    Hence, $\abs{N_G(a)} \leq 3(t-1)$, contradicting the assumptions of the lemma. 
    
    Note that if there were~$t$ vertices in~$N_G(a)$ contained in a path~${P \in \mathcal{P}}$ of length~$2$, then~$G$ would contain an induced subgraph isomorphic to~$K_{2,t}$, a contradiction. 
    So we can conclude that~$X$ has cardinality at least~${2(t-1)+1}$. 
    
    Clearly, every vertex in~$X$ has a neighbor in~$Y$. 
    Let~$Y'$ be an inclusion-minimal subset of~$Y$ such that each vertex in~$X$ has a neighbour in~$Y'$. 
    Every vertex in~$Y'$ has at least~$1$ and at most~$t-1$ neighbors in~$X$ (at least $t$ neighbors would result in an induced subgraph isomorphic to $K_{2,t}$, as before). 
    Hence, ${\abs{Y'} \geq 3}$. 
    Moreover, ${Y' \subseteq N_G(b)}$ and~$({\{a\} \cup X) \cap N_G(b) = \emptyset}$. 
    So if we find a pyramid~${\Pi \inducedsubgraph G}$ with presentation ${(a,x_1,x_2,x_3,y_1,y_2,y_3)}$ where~${x_i \in X}$ and~${y_i \in Y'}$ for all~${i \in [3]}$, then that pyramid is as desired. 

    Let~$\{y_1, y_2, y_3\}$ be an arbitrary subset of~$Y'$ of size~$3$. 
    By the minimality of~$Y'$, each $y_i$ for~${i \in [3]}$ has a neighbour~$x_i \in X \setminus N_G(Y' \setminus \{y_i\})$. 
    If~$\{y_1,y_2,y_3\}$ is a clique in~$G$, we are done.     
    So assume~$y_1$ and~$y_3$ are not adjacent. 
    Then, the path~${y_1 b y_3 x_3 a x_2}$ is an induced~$P_6$ in~$G$ (see \zcref{fig:pyramidfinding-2}), a contradiction. 
\end{proof}

\begin{figure}[htbp]
    \centering
    \includegraphics[scale=1]{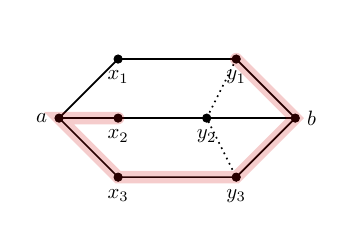}
    \caption{The situation in the proof of \zcref{lem:pyramids-exist-2}. 
    The dotted edges may or may not be edges of~$G$. }
    \label{fig:pyramidfinding-2}
\end{figure}

Next, we apply the connectifier lemma (\zcref{lem:conntw15}) to show that if in a $\{P_6,K_{2,t}\}$-free graph~$G$ we have a minimal separator with sufficiently large independence number, then there is a pyramid through the separator such that its apex and base are in different components. 

\begin{lemma}
    \label{lem:pyramids-exist-1}
    There exists a function~${f \colon \mathbb{N} \to \mathbb{N}}$ such that the following holds. 
    Let~${t \geq 3}$ be an integer, let~$G$ be a ${\{P_6,K_{2,t}\}}$-free graph, let~$S$ be a minimal separator of~$G$ with~${\alpha(S) \geq f(t)}$, and let~$C_1$ and~$C_2$ be two $S$-full components of~${G-S}$. 
    Then~$G$ contains a $t$-pyramid~$\Pi$ with presentation~$(a, x_1, \dots, x_t, y_1, \dots, y_t)$ as an induced subgraph such that
    \begin{itemize}
        \item ${V(\Pi) \cap S = \{x_1, \dots x_t\}}$, and 
        \item there exists an~${i \in [2]}$ for which~${a \in V(C_i)}$ and~${\{y_1, \dots, y_t\} \subseteq V(C_{3-i})}$. 
    \end{itemize}
\end{lemma}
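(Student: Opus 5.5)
We apply the connectifier lemma (\zcref{lem:conntw15}) twice, once on each side of $S$, and then use $P_6$-freeness to rule out every outcome except a clique on one side and a star on the other. Throughout we use that $C_1,C_2$ are $S$-full, so each $s\in S$ has a neighbour in both $C_1$ and $C_2$.

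\textbf{Step 1 (independent subset of $S$).} Pick an independent set $I\subseteq S$ with $|I|=f(t)$.

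\textbf{Step 2 (first connectifier).} Apply \zcref{lem:conntw15} to $G[C_1\cup I]$ with $Y=I$ and $h$ large. The hypotheses hold: $C_1$ is connected and every vertex of $I$ has a neighbour in $C_1$. We obtain $I_1\subseteq I$ with $|I_1|=h$ and an induced subgraph $H_1\subseteq C_1$ of one of the listed types.

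\textbf{Step 3 (second connectifier).} Apply \zcref{lem:conntw15} to $G[C_2\cup I_1]$, obtaining $I_2\subseteq I_1$ of size $h'$ and $H_2\subseteq C_2$. We choose $f(t)=\mu(\mu(h'))$ for suitable $h'$ depending on $t$.

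\textbf{Step 4 (eliminating bad outcomes on each side).} Since $G$ is $P_6$-free, each $H_i$ has bounded diameter, or more precisely its induced paths have fewer than $6$ vertices.

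If $H_i$ is a path, it has at most $5$ vertices. So some vertex of $H_i$ is adjacent to $h'/5$ vertices of the independent set $I_2$. Two such vertices $u\in H_1$, $v\in H_2$ are non-adjacent, since they lie in different components of $G-S$, and together with $t$ common independent neighbours they form an induced $K_{2,t}$. Hence if both sides are paths we obtain a contradiction. If one side is a path, passing to a subset we may assume the whole set $I_2$ is complete to a single vertex $u$ on that side.

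For the subdivided star, comb, or line-graph types, the subdivision paths must be short and the comb spine must have few degree-$3$ vertices, because otherwise we get a long induced path. Combining two leaves $s,s'\in I_2$ through $H_1$ with a short path through the other side also yields a $P_6$ unless the structures are very compact. Pruning to subsets, we expect to reduce to two possibilities on each side:
\begin{itemize}
\item[(i)] a single vertex complete to the chosen subset of $I$ (this comes from a star with length-$1$ legs, or the path case);
\item[(ii)] the line graph of a star with subdivided legs of length $1$, i.e.\ a clique $K$ whose vertices are matched to distinct elements of $I$. Longer legs would create an induced $P_6$: take $s$, then its leg, then the clique, then another leg, then $s'$.
\end{itemize}

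\textbf{Step 5 (assembling the pyramid).} Outcome (i) on both sides gives a $K_{2,t}$, which is impossible. Outcome (ii) on both sides gives, for three elements $s_1,s_2,s_3\in I_2$ with matched clique vertices $y_j\in C_1$ and $z_j\in C_2$, the path $y_1 s_1 z_1 z_2 s_2 y_2$. This path is induced: the $y$'s and $z$'s lie in different components, and each $s_j$ has a unique neighbour in each $H_i$. Since $y_1y_2\in E(G)$ it is actually a cycle, so we must use a longer path. The path $y_1 s_1 z_1 z_2 s_2$ extended by $s_2$'s neighbour $y_2$ closes the cycle; instead take $s_3$: the path $s_1 z_1 z_2 s_2 y_2 y_3$ has $6$ vertices and is induced, a contradiction. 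Hence exactly one side, say $C_{3-i}$, has outcome (ii) and the other side has outcome (i). Setting $a=u\in C_i$, $x_j=s_j$, and $y_j$ the matched clique vertices in $C_{3-i}$ gives the desired induced $t$-pyramid with $V(\Pi)\cap S=\{x_1,\dots,x_t\}$.

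\textbf{Main obstacle.} The main work lies in Step 4: carefully excluding combs, line graphs of combs, and subdivided stars with long legs via $P_6$-freeness and $K_{2,t}$-freeness, and choosing Ramsey-type subsets so that the prunings are compatible across the two sides. We would handle each structure type by exhibiting an explicit induced $P_6$ through two or three elements of $I$ and a few structure vertices on each side, and we would set $h'$ large enough, around $10t$, to absorb the pruning.
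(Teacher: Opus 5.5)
Your overall strategy is the paper's: apply the connectifier lemma once on each side of $S$, then use $P_6$- and $K_{2,t}$-freeness to kill every combination except ``a vertex complete to $I$ on one side, a clique with a matching to $I$ on the other''. However, the per-side reduction you claim in Step~4 is wrong, and as a result Step~5 misses half of the cases.

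\textbf{The missing type.} In the subdivided-star outcome of the connectifier lemma, the vertices of $I$ attach only to simplicial vertices of $H$. So the centre $c$, which has degree at least $3$, has no neighbour in $I$. A star with legs of length one therefore does \emph{not} give a vertex complete to $I$. After your one-sided pruning, two things happen: a leaf of $H$ with many $I$-neighbours gives type (i), and a leg of length at least $2$ gives an induced $P_6$ between two elements of $I$. What is left is the $1$-subdivision of a star: $c$ is adjacent to pairwise nonadjacent $m_1,\dots,m_k$, and the $m_js_j$ are the only edges to $I$. This configuration cannot be excluded on one side alone. The longest induced path through it joining two elements of $I$ is $s_1m_1cm_2s_2$, which has only $5$ vertices. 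So each side has three possible types, not two, and Step~5 must handle six combinations rather than three. This six-case analysis is exactly what the paper carries out.

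\textbf{Closing the three missing cases.} Your sentence about combining two leaves through $H_1$ ``with a short path through the other side'' does not work as stated against type (i). If the other side is a vertex $u$ complete to the chosen $s_j$, then $u$ is adjacent to both $s_1$ and $s_2$, so $s_1m_1cm_2s_2u$ is not induced. The correct path is $m_1cm_2s_2us_3$, which is an induced $P_6$. Against a second $1$-subdivided star, or against a clique with a matching, use $s_1m_1cm_2s_2w$, where $w$ is the private neighbour of $s_2$ on the other side; this is also an induced $P_6$.

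With this third type added, your remaining Step~5 arguments complete the proof. Those are: the $K_{2,t}$ from two type-(i) sides, the induced path $s_1z_1z_2s_2y_2y_3$ for two cliques, and the pyramid from (i) plus (ii).

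\textbf{A minor point on the bound.} Your estimate $h'\approx 10t$ is too small: two rounds of pigeonholing over paths with up to five vertices already cost a factor of $25$. Any sufficiently large function of $t$ works, though. Applying both connectifier steps first and pruning afterwards, as you do, is compatible, since restricting $I$ preserves each structure type.
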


\begin{proof}
    Let~$\mu$ be the function from \zcref{lem:conntw15}. 
    Without loss of generality, we may assume that~${\mu(n) \geq 5}$ for every~${n \in \mathbb{N}}$. 
    We define~${f(n) \coloneqq \mu(\mu(n^2)^2)}$ for every~${n \in \mathbb{N}}$. 

    Let~${Y \subseteq S}$ be an independent set of size~$f(t)$. 

    \setcounter{claim}{0}
    \begin{claim}\label{claim1}
        There is a subset~${X_1 \subseteq V(C_1)}$ and a subset~${Y_1 \subseteq Y}$ with~${\abs{Y_1} \geq \mu(t^2)}$ such that~${H_1 \coloneqq G[X_1 \cup Y_1]}$ is either a star, the $1$-subdivision of a star, or the line graph of the $1$-subdivision of a star, and the vertices in~$Y_1$ are precisely its vertices of degree~$1$ in~$H_1$. 
    \end{claim}

    \begin{subproof}
        If there is a vertex~${v_1 \in V(C_1)}$ with~${\abs{N(v_1) \cap Y} \geq \mu(t^2)}$, then we set~${X_1 \coloneqq \{v_1\}}$ and~${Y_1 \coloneqq N(v_1) \cap Y}$, and we observe that~${G[X_1 \cup Y_1]}$ is a star where~$Y_1$ is the set of leaves, as desired. 
        
        If there is no such vertex, we apply \zcref{lem:conntw15} to~$h = \mu(t^2)^2$ and~$G[V(C_1) \cup Y]$ to conclude that there is an induced subgraph~$H$ of~$C_1$ and a set~${Y' \subseteq Y}$ of size~${\mu(t^2)^2}$ such that either
        \begin{itemize}
            \item $H$ is a path and every vertex of~$Y'$ has a neighbor in~$H$; or
            \item $H$ is a subdivided comb, the line graph of a subdivided comb, a subdivided star, or the line graph of a subdivided star, every vertex of~$Y'$ has a unique neighbor in~$H$ and~${H \cap N(Y') = \mathcal{Z}(H)}$, where~$\mathcal{Z}(H)$ denotes the set of simplicial vertices of~$H$. 
        \end{itemize}
        
        We first show that $H$ is a subdivided star or the line graph of a subdivided star. 
        Suppose that this is not the case.
        Then, $H$ is a path, a subdivided comb, or the line graph of a subdivided comb. 
        Let~$A$ denote the set of vertices in~$H$ with neighbors in~$Y'$. 
        Then~${A\subseteq V(H)}$, with ${A = \mathcal{Z}(H)}$, unless~$H$ is a path. 
        Since~$G$ is $P_6$-free and $Y'$ is an independent set in $G$, if~$H$ is a subdivided comb or the line graph of a subdivided comb, then $H$ is $P_4$-free. 
        It is easy to observe that a $P_4$-free subdivided comb is a subdivision of a star, a contradiction.
        Furthermore, a $P_4$-free line graph of a subdivided comb has at most four simplicial vertices. 
        Hence, in all cases, ${\abs{A} \leq 5}$. 
        Since every vertex of~$Y'$ has a neighbor in~$H$ and each vertex of~$H$ has fewer than~$\mu(t^2)$ neighbors in~${Y \supseteq Y'}$, we have
        \[
            {\abs{Y'} \leq \abs{A} \cdot (\mu(t^2)-1) \leq 5 \cdot (\mu(t^2)-1) \leq \mu(t^2)(\mu(t^2)-1) < \mu(t^2)^2},
        \]
        a contradiction. 
        
        It follows that $H$ is a subdivided star or the line graph of a subdivided star. 
        Recall that each vertex in~$Y'$ has a unique neighbor in~$\mathcal{Z}(H)$, and each vertex in $\mathcal{Z}(H)$ has at least one neighbor in $Y'$. 
        Since each vertex of~$H$ has at most~$\mu(t^2)-1$ neighbors in~$Y'$, only simplicial vertices of~$H$ have neighbors in~$Y'$, and~${\abs{Y'} \geq \mu(t^2)^2}$, we conclude that~${\abs{\mathcal{Z}(H)} > \mu(t^2)}$. 
        For each~${v \in \mathcal{Z}(H)}$ we pick a vertex~${y_v \in Y' \cap N(v)}$.
    
        Suppose that~$H$ is a subdivided star. 
        Since $H$ is not a path, it has maximum degree at least $3$; hence, its center~$c$ is not simplicial and, consequently, no vertex in~$Y'$ is adjacent to~$c$. 
        Let~${X_1 \coloneqq V(H)}$ and~${Y_1 \coloneqq \{ y_v \colon v \in \mathcal{Z}(H) \}}$. 
        So~${H_1 \coloneqq G[X_1 \cup Y_1]}$ is a properly subdivided star. 
        Moreover, ${\abs{Y_1} = \abs{\mathcal{Z}(H)} \geq \mu(t^2)}$.
        If there is a vertex~${y \in Y_1}$ for which the path from~$y$ to~$c$ in~$H_1$ has length at least~$3$, then concatenating this path with any other path from~$c$ to~${y' \in Y_1\setminus\{y\}}$ yields an induced path of length at least~$5$ in $G$, a contradiction. 
        So~$H_1$ is the $1$-subdivision of a star, as desired. 

        Finally, suppose~$H$ is the line graph of a subdivided star. 
        Assume first that two distinct vertices~$v$ and~$w$ of~$H$ have degree~$1$. 
        Then there is an induced path in $G[V(H)\cup \{y_v,y_w\}]$ between~$y_v$ and~$y_w$ that has length at least~$5$, a contradiction. 
        So~$H$ has at most one vertex of degree~$1$. 
        Let~$v$ be the vertex of~$H$ of degree~$1$ if it exists, otherwise let~$v$ be an arbitrary vertex of~$H$. 
        Then,~${H-v}$ is a complete graph. 
        Let~${X_1 \coloneqq V(H-v)}$ and~${Y_1 \coloneqq \{ y_v \colon v \in X_1 \cap \mathcal{Z}(H) \}}$. 
        Note that ${\abs{Y_1} = \abs{\mathcal{Z}(H)}-1 \geq \mu(t^2)}$.
        But then, \linebreak ${H_1 \coloneqq G[X_1 \cup Y_1]}$ is the line graph of the $1$-subdivision of a star, as desired.   
    \end{subproof}

    With a completely analogous proof, we get the following. 
    
    \begin{claim}
        There is a subset~${X_2 \subseteq V(C_2)}$ and a subset~${Y_2 \subseteq Y_1}$ with~${\abs{Y_2} \geq t}$ such that ${H_2 \coloneqq G[X_2 \cup Y_2]}$ is either a star, the $1$-subdivision of a star, or the line graph of the $1$-subdivision of a star, and the vertices in~$Y_2$ are precisely its vertices of degree~$1$. 
    \end{claim}

    {\color{blue}Note that there exists a subset $X_1'$ of $X_1$ such that $G[X_1' \cup Y_2]$ is either a star, the $1$-subdivision of a star, or the line graph of the $1$-subdivision of a star (depending on whether $H_1$ is a star, the $1$-subdivision of a star, or the line graph of the $1$-subdivision of a star, respectively).} 
        Now consider~${H \coloneqq G[{\color{blue}X_1'} \cup Y_2 \cup X_2]}$. 
    If both~$H_1$ and~$H_2$ are stars, then~$H$ is isomorphic to~$K_{2,t'}$ for~${t' = \abs{Y_2} \geq t}$, a contradiction. 
    If one of~$H_1$ or~$H_2$ is a star and the other is the line graph of the $1$-subdivision of a star, then~$H$ is a $t'$-pyramid for~${t' = \abs{Y_2} \geq t \geq 3}$. 
    In all other cases, we easily find an induced path of length at least~$5$ in $G$, see \zcref{fig:pyramidfinding-1}. 
    This completes the proof of \zcref{lem:pyramids-exist-1}.
\end{proof}

\begin{figure}[htbp]
    \centering
    \includegraphics[scale=1]{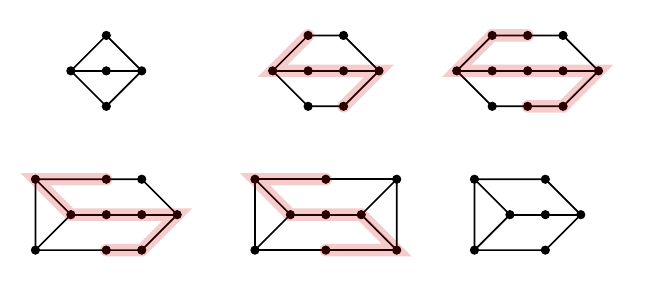}
    \caption{The six cases from the proof of \zcref{lem:pyramids-exist-1}, illustrated for the case $t = 3$. \\
    Top row: two stars ($K_{2,t}$), a star and a $1$-subdivision of a star (contains an induced $P_6$), and two $1$-subdivisions of a star (contains an induced~$P_9$). \\
    Bottom row: the line graph of the $1$-subdivision of a star and the $1$\dash{}subdivision of a star (contains an induced~$P_8$), two line graphs of $1$\dash{}subdivisions of stars (contains an induced~$P_7$), and a line graph of the $1$-subdivision of a star and a star (the desired pyramid).}
    \label{fig:pyramidfinding-1}
\end{figure}

\subsection{\texorpdfstring{%
Proof of \zcref{lem:small-alpha-separator}}{%
Proof of Lemma 3.1}}

We are now ready to prove \zcref{lem:small-alpha-separator}. 
To this end, using \zcref{lem:pyramids-exist-2}, between any pair~$a$, $b$ of nonadjacent vertices we find a pyramid with apex~$a$ with respect to which~$b$ is basic, for which then \zcref{lem:simplicial-pyramid-0,lem:simplicial-pyramid-2} allow us to find an $(a,b)$-separator with small independence number.

\mainlema*

\begin{proof}
    Let~$\mathcal{P}$ be the set of all induced $(a,b)$-paths in~$G$ and let~${X}$ be the set of all vertices~${x \in V(G)}$ such that there exists a path~${P \in \mathcal{P}}$ with~${x\in N_G(a) \cap V(P)}$. 
    Since~$a$ and~$b$ belong to different components of~${G - X}$, if~${\alpha(X) \leq  14 (t - 1) + 3}$, then~$X$ is the desired $(a,b)$-separator. 
    We can therefore assume that~${\alpha(X) \geq 14 (t - 1) + 3 \geq 3(t-1)+1}$. 
    Let~$I$ be an independent set in~$G$ such that~${\abs{I} = 3(t-1)+1}$ and~${I \subseteq {X}}$. 
    Let~$C$ be the component of~${G - {X}}$ containing~$b$. 
    
    Applying \zcref{lem:pyramids-exist-2} to~$G' \coloneqq G[\{a\} \cup I \cup V(C)]$, $a$, and~$b$ yields a pyramid~$\Pi$ with presentation~$(a,x_1,x_2,x_3,y_1,y_2,y_3)$ such that~$b$ is basic. 
    Clearly, $\Pi$ is an induced subgraph of~$G$. 

    By \zcref{lem:simplicial-pyramid-0}, there exists a set~${Z \subseteq V(G) \setminus (V(\Pi) \cup\{b\})}$ with~${\alpha(Z) \leq 12(t-1)}$ such that~$\Pi$ is simplicial in the component $D$ of~${G-Z}$ that contains~$\Pi$.
    Applying \zcref{lem:simplicial-pyramid-2} to $D$, there exists an $(a,b)$-separator~$S'$ in~$G-Z$ with~${\alpha(S') \leq 2(t-1) + 3}$. 
    Hence, we can conclude that~${S \coloneqq S' \cup Z}$ is an $(a,b)$-separator of~$G$ with ${\alpha(S) \leq 14(t-1) + 3}$, as desired. 
\end{proof}

\subsection{\texorpdfstring{%
Proof of \zcref{lem:balanced-separator-closed-neighborhood}}{%
Proof of Lemma 3.2}}

Lastly, we prove \zcref{lem:balanced-separator-closed-neighborhood}. 
Our strategy is to first apply \zcref{cor:minimalseparatorboundedtreealpha}, which allows us to assume that the graph has a minimal separator with large independence number.
This allows us to call upon \zcref{lem:pyramids-exist-1} to show that the graph contains a pyramid. 
Picking the right pyramid, we are then able to show that the set $Z$ of small  independence number guaranteed by 
\zcref{lem:simplicial-pyramid-0} together with a suitably chosen vertex $z_0$ leads to a balanced separator with the desired structure.

\mainlemb*

\begin{proof}
    Let~$f$ denote the function from \zcref{lem:pyramids-exist-1}. 
    We may assume that $f$ is nondecreasing and that~${f(t) \geq 12(t-1)}$ for all~${t \geq 3}$.
    We set $g(t) \coloneqq 2f(t)$ for every~${t \geq 3}$, and we set~${g(2) \coloneqq g(3)}$. 
    Since~$K_{2,2}$ is an induced subgraph of~$K_{2,3}$, every $K_{2,2}$-free graph is also $K_{2,3}$-free. 
    Hence, without loss of generality, we may assume that~${t \geq 3}$. 
    \setcounter{claim}{0}
    \begin{claim}
        \label{clm:WMA-lightclosedneighborhoods}
        We may assume that~${\mathsf{w}(N_G[v]) \leq \nicefrac{1}{8}}$ for all~${v \in V(G)}$. 
    \end{claim}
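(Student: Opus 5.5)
The statement to prove is \zcref{clm:WMA-lightclosedneighborhoods}: we may assume $\mathsf{w}(N_G[v]) \leq \nicefrac{1}{8}$ for every vertex $v$. The plan is to show that if some vertex $v$ has $\mathsf{w}(N_G[v]) > \nicefrac{1}{8}$, then the conclusion of \zcref{lem:balanced-separator-closed-neighborhood} already holds. So suppose $v$ is such a vertex. The idea is to take $z_0 \coloneqq v$ and build a small set $Z$ so that $N[v] \cup Z$ is a $(\mathsf{w},\nicefrac{7}{8})$-balanced separator.

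First try $Z = \emptyset$. If every component $D$ of $G - N[v]$ has $\mathsf{w}(D) \leq \nicefrac{7}{8}$, we are done. Otherwise there is a single heavy component $D$ with $\mathsf{w}(D) > \nicefrac{7}{8}$. This contradicts $\mathsf{w}(N[v]) > \nicefrac{1}{8}$, because $D$ and $N[v]$ are disjoint and their total weight would exceed $1$. Hence $N[v]$ itself is a $(\mathsf{w},\nicefrac{7}{8})$-balanced separator. In fact every component of $G - N[v]$ has weight at most $1 - \mathsf{w}(N[v]) < \nicefrac{7}{8}$. We therefore take $z_0 = v$ and $Z = \emptyset$, and trivially $\alpha(Z) = 0 \leq g(t)$.

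So the claim is immediate from normality of $\mathsf{w}$ and disjointness. In the rest of the proof we may assume every closed neighbourhood has weight at most $\nicefrac{1}{8}$. I expect no obstacle here; the only point to check is that the strict inequality yields the non-strict bound $\leq \nicefrac{7}{8}$, which it does.
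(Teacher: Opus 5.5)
Your proof is correct and matches the paper's argument: with $z_0 = v$ and $Z = \emptyset$, every component of $G - N_G[v]$ has weight at most $1 - \mathsf{w}(N_G[v]) < \nicefrac{7}{8}$, since $\mathsf{w}$ is normal and non-negative. The preliminary case split via a hypothetical heavy component is redundant, since the direct bound already settles it, but it does no harm.
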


    \begin{subproof}
        Suppose there is a vertex~${v \in V(G)}$ such that~${\mathsf{w}(N_G[v]) > \nicefrac{1}{8}}$. 
        Then for every component~$C$ of~${G - N_G[v]}$ we have~$\mathsf{w}(C) \leq 1 - \mathsf{w}(N_G[v]) \leq \nicefrac{7}{8}$. 
        In particular, ${N_G[v] \cup \emptyset}$ is the desired $(\mathsf{w},\nicefrac{7}{8})$-balanced separator of~$G$. 
    \end{subproof}
    
    \begin{claim}
        \label{clm:WMA-minimalseparator}
         We may assume that~$G$ has a minimal separator~$S$ with~$\alpha(S) \geq f(t)$. 
    \end{claim}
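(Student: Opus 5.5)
Claim~\ref{clm:WMA-minimalseparator} follows from \zcref{cor:minimalseparatorboundedtreealpha} applied with~${q \coloneqq f(t)}$. The plan is a contrapositive argument. Suppose that every minimal separator~$S$ of~$G$ satisfies~${\alpha(S) < f(t)}$. Since~${f(t) \geq 12(t-1) \geq 2}$ for~${t \geq 3}$, the hypothesis~${q \geq 2}$ of \zcref{cor:minimalseparatorboundedtreealpha} holds. That corollary then gives a $(\mathsf{w},\nicefrac{1}{2})$-balanced separator~$Z$ of~$G$ with~${\alpha(Z) \leq 2f(t) - 2 \leq g(t)}$.

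Next, upgrade~$Z$ to a $(\mathsf{w},\nicefrac{7}{8})$-balanced separator, using the observation in the preliminaries that a $(\mathsf{w},c)$-balanced separator is also $(\mathsf{w},c')$-balanced for every~${c \leq c' < 1}$. We still need a vertex~$z_0$. Since~$G$ is connected it is nonempty, so we may take~$z_0$ to be any vertex of~$G$. Adding~$N[z_0]$ to~$Z$ can only shrink the components: each component of~${G - (N[z_0] \cup Z)}$ lies inside a component of~${G - Z}$, so its weight is at most~$\nicefrac{7}{8}$. Hence~$z_0$ and~$Z$ satisfy the conclusion of \zcref{lem:balanced-separator-closed-neighborhood}, and we may assume the contrary, namely that some minimal separator~$S$ has~${\alpha(S) \geq f(t)}$.

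There is no real obstacle here. The only points to check are the constant bookkeeping ($2f(t)-2 \leq g(t) = 2f(t)$, and~${q \geq 2}$ from the normalisation~${f(t) \geq 12(t-1)}$ with~${t \geq 3}$) and the monotonicity of balanced separators under enlarging the separator.
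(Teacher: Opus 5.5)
Your proposal is correct and is essentially the paper's proof: apply \zcref{cor:minimalseparatorboundedtreealpha} with $q = f(t)$ to obtain a $(\mathsf{w},\nicefrac{1}{2})$-balanced separator $Z$ with $\alpha(Z) \leq 2f(t)-2 \leq g(t)$, then take $N[z_0] \cup Z$ for an arbitrary vertex $z_0$. You also explicitly verify $q = f(t) \geq 2$ and the monotonicity step, which the paper leaves implicit.
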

    
    \begin{subproof}
        If for every minimal separator~$S$ in~$G$ we have~$\alpha(S) < f(t)$, then, by \zcref{cor:minimalseparatorboundedtreealpha}, we have that~$G$ has a $(\mathsf{w},\nicefrac{1}{2})$-balanced separator~$Z$ with~${\alpha(Z) \leq 2f(t)-2}\leq g(t)$. 
        Then, for any vertex $z_0\in V(G)$, the set $N[z_0]\cup Z$ is a $(\mathsf{w},\nicefrac{7}{8})$-balanced separator, as desired. 
    \end{subproof}

    By \zcref{clm:WMA-minimalseparator,lem:pyramids-exist-1}, $G$ contains a pyramid.     
    Let~${\Pi \inducedsubgraph G}$ be a pyramid with presentation~${(a,x_1,x_2,x_3,y_1,y_2,y_3)}$ such that~$\alpha(N_G(a))$ is maximized among all pyramids in~$G$. 
    Let~${Z \subseteq N_G(V(\Pi))}$ with~${\alpha(Z) \leq 12(t-1)}$ be such that~$\Pi$ is simplicial in the component~$G'$ of~${G - Z}$ that contains~$\Pi$ as in \zcref{lem:simplicial-pyramid-0}. 
    Note that for~${H \coloneqq G - (V(G')\cup Z)}$ we have~${N_G(H) \subseteq Z}$ since~$G'$ is a component of~${G - Z}$. 
    By \zcref{clm:WMA-lightclosedneighborhoods}, ${\mathsf{w}(N_G[V(\Pi)]) \leq \nicefrac{7}{8}}$. 
    If~$Z$ is a $(\mathsf{w},\nicefrac{7}{8})$-balanced separator of~$G$, then  for any vertex $z_0\in V(G)$, the set $N[z_0]\cup Z$ is a $(\mathsf{w},\nicefrac{7}{8})$-balanced separator of~$G$, and we are done, since $\alpha(Z) \leq g(t)$.
    So we may assume that~${G - Z}$ has a component~$C$ with~$\mathsf{w}(C) > \nicefrac{7}{8}$. 
    Note that~${C \neq G'}$, since $\Pi$ is simplicial in $G'$, so~$C$ is a component of~$H$. 
    Let~${z_0 \in Z \cap N_G(C)}$ be arbitrary (such a vertex exists since~$G$ is connected). 

    We claim that~${S \coloneqq N_G[z_0] \cup Z}$ a $(\mathsf{w},\nicefrac{7}{8})$-balanced separator. 

    \smallskip
    
    Suppose for a contradiction that it is not. 
    Let~$C'$ be the unique component of~${G - S}$ with~$\mathsf{w}(C') > \nicefrac{7}{8}$. 
    Since~${Z \subseteq S}$ and ${\mathsf{w}((G - S) - C) \leq \mathsf{w}(G - C) < \nicefrac{1}{8}}$ (as $\mathsf{w}(C) > \nicefrac{7}{8}$), we conclude that~${C' \subseteq C}$. 
    Let~${Z_1 \coloneqq N_G(z_0) \cap V(C) \cap N_G(C')}$. 
    Since~$C$ is connected and ${N_G(z_0) \cap V(C) \neq \emptyset}$, we infer that ${Z_1 \neq \emptyset}$. 
    Let~${z_1 \in Z_1}$ be arbitrary. 
            
    \begin{claim}
        \label{clm:WMA-Z0completetopyramid}
        ${V(G') \subseteq N_G(z_0)}$. 
    \end{claim}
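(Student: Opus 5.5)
The plan is to exploit the short path $z_1 z_0$ entering $G'$ from outside. Recall that $z_1 \in V(C)$ and that $C$ is a component of $H$ with $N_G(H) \subseteq Z$. Hence $z_1$ has no neighbours in $V(G')$, and its only neighbours outside $C$ lie in $Z$. So for every induced path $Q$ in $G'$ whose first vertex is adjacent to $z_0$ and whose other vertices are non-neighbours of $z_0$, the path $z_1 z_0 Q$ is induced in $G$. Since $G$ is $P_6$-free, every such $Q$ has at most $3$ vertices. This \emph{anchored-path} constraint, together with $K_{2,t}$-freeness and the choice of $\Pi$ maximising $\alpha(N_G(a))$, is what I intend to use throughout.

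\textbf{Step 1: $z_0$ is complete to $V(\Pi)$.} Since $z_0 \in Z$, the set $N_G(z_0) \cap V(\Pi)$ is nonempty and not a clique. Suppose $z_0$ misses some vertex $v \in V(\Pi)$. I will go through the possible traces $N_G(z_0)\cap V(\Pi)$ up to the symmetry of $\Pi$. In each case I aim to find an induced path in $\Pi - N_G(z_0)$ on $4$ vertices that starts at a vertex adjacent to a neighbour of $z_0$, where only that neighbour is adjacent to $z_0$; prepending $z_1 z_0$ then yields an induced $P_6$. The natural candidates are the paths $x_i a x_j y_j$ and $y_i y_j x_j a$, as in the proof of \zcref{lem:simplicial-pyramid-0}.

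For the few traces where no such $Q$ exists, I would instead build a pyramid with apex $z_0$ or $z_1$, for instance by rerouting through $z_0$ in place of $a$ or of a base vertex. I would then contradict either the maximality of $\alpha(N_G(a))$ or $K_{2,t}$-freeness. The latter arises, for example, when $z_0$ and $a$ together have $t$ common neighbours that are pairwise nonadjacent.

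\textbf{Step 2: extend from $V(\Pi)$ to $A \cup B$.} By \zcref{lem:simplicial-pyramid-0}, $\{A,B,V(\Pi)\}$ partitions $V(G')$. Let $w \in A \cup B$ with $w \notin N_G(z_0)$. Since $\Pi$ is simplicial in $G'$, the vertex $w$ misses some vertex of $\Pi$. Because $z_0$ is now complete to $\Pi$, anchored paths through $\Pi$ are too short to produce a $P_6$. Instead I would compare $N_G(z_0)$ with $N_G(a)$. The vertex $z_0$ is adjacent to $a$ and to $x_1, x_2, x_3$, and if $z_0$ misses many vertices of $A$, then the independent sets in $N_G(a)$ must be pushed into $N_G(z_0)$. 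Using $z_1$, which is anticomplete to $G'$, I would try to build a pyramid whose apex has a larger independent set in its neighbourhood than $a$ does. Candidates are apex $z_0$ with the $x_i$ replaced by suitable vertices, or a pyramid through $w$. This would contradict the choice of $\Pi$. For $w \in B$, I would use the induced path $z_1 z_0 a x_i y_i w$. It is induced precisely when $z_0$ misses $w$, so it is forbidden, and this forces $z_0w\in E(G)$ directly; $z_0$ is adjacent to $a$ and $x_i$, so the path needs adjusting, e.g. $z_1 z_0 y_i w$ extended by a non-neighbour of $z_0$.

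\textbf{Main obstacle.} I expect the main obstacle to be the vertices of $A$. There, $P_6$-freeness alone seems insufficient once $z_0$ is complete to $\Pi$, so the argument must genuinely use the extremal choice of $\Pi$, namely the maximality of $\alpha(N_G(a))$. I would first try to show that $\alpha(N_G(z_0)) > \alpha(N_G(a))$ and that $z_0$ is the apex of some pyramid; if that fails, I would look for a $K_{2,t}$ with $a$ and $z_0$ as the two-vertex side.
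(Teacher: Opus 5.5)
Your plan has a genuine gap. The only thing you attach on the far side of $z_0$ is the single edge $z_0z_1$. The ingredients you then rely on are $P_6$- and $K_{2,t}$-freeness, the fact that $C$ is a component of $H$, and the maximality of $\alpha(N_G(a))$. These do not imply the claim. Take $t=4$ and a pyramid $\Pi=(a,x_1,x_2,x_3,y_1,y_2,y_3)$. Add two vertices $w_1,w_2$ adjacent only to $a$, a vertex $z_0$ adjacent exactly to $x_1,x_2,x_3$, and a vertex $z_1$ adjacent only to $z_0$. This graph is $\{P_6,K_{2,4}\}$-free: its longest induced paths have $5$ vertices, and only $a$ and $z_0$ have degree at least $4$, with just three common neighbours. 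Its only pyramids have apex $a$ or $z_0$, and $\alpha(N_G(a))=5>4=\alpha(N_G(z_0))$. Moreover $Z=\{z_0\}$, $V(G')=V(\Pi)\cup\{w_1,w_2\}$ and $C=\{z_1\}$, yet $z_0a\notin E(G)$. So your Step~1 fails in this generality. Here the trace of $z_0$ is $\{x_1,x_2,x_3\}$, every anchored path $z_1z_0Q$ has at most $5$ vertices, and $\{a,z_0\}$ spans only a $K_{2,3}$. No case analysis of traces can close this. The claim holds only because it sits inside the contradiction hypothesis that $S=N_G[z_0]\cup Z$ is not balanced, together with the standing assumption $\mathsf{w}(N_G[v])\le\nicefrac{1}{8}$ for all $v$. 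In the example, $C=\{z_1\}$ could not have weight above $\nicefrac{7}{8}$. Your proposal uses neither of these.

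The missing idea is to lengthen the far side through the heavy component $C'$ of $G-S$. Since $z_1\in Z_1\subseteq N_G(C')$ and $\mathsf{w}(N_G[z_1])\le\nicefrac{1}{8}<\mathsf{w}(C')$, there is $w\in V(C')\setminus N_G[z_1]$. There is then an induced $(z_1,w)$-path $Q$ in $G[V(C')\cup\{z_1\}]$ of length at least~$2$. The vertices of $Q$ other than $z_1$ lie in $C'$. They are therefore nonadjacent to $z_0$, because $N_G[z_0]\subseteq S$. They are also nonadjacent to $V(G')$, because $C'\subseteq C$ and $C$ is a component of $G-Z$ different from $G'$. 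Likewise $z_1\in V(C)$ has no neighbour in $V(G')$. Now let $v\in V(G')\setminus N_G(z_0)$. An induced $(v,z_0)$-path $P$ in the connected graph $G[V(G')\cup\{z_0\}]$ has length at least~$2$. Then $vPz_0z_1Qw$ is an induced path on at least $6$ vertices, a contradiction. This is the paper's argument. It treats every $v$ uniformly and needs no analysis of traces on $\Pi$, no $K_{2,t}$-freeness, and no appeal to the extremal choice of $\Pi$; that choice is used only after this claim.
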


    \begin{subproof}
        Assume for a contradiction that there exists~${v \in V(G') \setminus N_G(z_0)}$, and let~$P$ be an induced~$(v,z_0)$-path in the connected graph~${G[V(G') \cup \{z_0\}]}$. 
        By assumption, the length of~$P$ is at least~$2$. 
        
        Observe that~${C'\nsubseteq N_G[z_1]}$ as otherwise, ${\mathsf{w}(N_G[z_1]) \geq \mathsf{w}(C') > \nicefrac{7}{8}}$, contradicting \zcref{clm:WMA-lightclosedneighborhoods}. 
        Let~${w \in C' \setminus N_G[z_1]}$ and let~$Q$ be an induced $(w,z_1)$-path in~${G[C' \cup \{z_1\}]}$. 
        As before, the length of~$Q$ is at least~$2$. 
        Then the path~${v P z_0 z_1 Q w}$ is an induced path of length at least~$5$ in $G$, contradicting that~$G$ is $P_6$-free. 
    \end{subproof}

    \begin{claim}
        \label{clm:WMA-Z1largealpha}
        We may assume that~${\alpha(Z_1) > \alpha(Z)}$. 
    \end{claim}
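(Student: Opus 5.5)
The plan is to show that in the case $\alpha(Z_1) \le \alpha(Z)$ we can finish directly, so that afterwards only the case $\alpha(Z_1) > \alpha(Z)$ remains. I expect the lemma to be finished later by a contradiction with the choice of $\Pi$. By \zcref{clm:WMA-Z0completetopyramid}, $z_0$ is complete to $V(G')$, and $Z_1 \subseteq N_G(z_0)$ lies in $C$, which is anticomplete to $G'$. Hence
\[
\alpha(N_G(z_0)) \;\ge\; \alpha(N_{G'}(a)) + \alpha(Z_1), \qquad \alpha(N_G(a)) \;\le\; \alpha(N_{G'}(a)) + \alpha(Z),
\]
using $N_G(a) \subseteq V(G') \cup Z$. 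So once $\alpha(Z_1) > \alpha(Z)$, the neighbourhood of $z_0$ beats that of $a$. A pyramid with apex $z_0$, built for instance via \zcref{lem:pyramids-exist-2}, would then contradict the maximality of $\alpha(N_G(a))$. This explains why the claim is the natural dichotomy.

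For the remaining case $\alpha(Z_1) \le \alpha(Z) \le 12(t-1)$, I would first record where $C'$ attaches. Since $C' \subseteq C$ and $N_G(C) \subseteq Z$, every neighbour of $C'$ lies either in $Z$ or in $S \cap V(C)$. The only vertex of $N_G[z_0]$ outside $C$ that could matter is $z_0 \in Z$. Thus $N_G(C') \subseteq Z \cup Z_1$, and so the set $Z^\ast \coloneqq Z \cup Z_1$ satisfies $\alpha(Z^\ast) \le 24(t-1) \le 2f(t) = g(t)$, using $f(t) \ge 12(t-1)$. It also isolates the heavy part $C'$ from the rest of $G$.

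The next step is to rerun the argument with the pair $(z_1, Z^\ast)$ in place of $(z_0, Z)$. Let $z_1 \in Z_1$ be a vertex with a neighbour in $C'$, and test whether $N_G[z_1] \cup Z^\ast$ is a $(\mathsf{w},\nicefrac{7}{8})$-balanced separator. If it is, we are done, because $\alpha(Z^\ast) \le g(t)$. If it is not, there is a heavy component $C'' \subseteq C'$, and the analogue of \zcref{clm:WMA-Z0completetopyramid} applies with the same $P_6$-argument. Any induced path from a vertex of $G' \cup C$ (on the far side of $z_1$) through $z_1$ into $C''$, of length at least $5$, is forbidden. This forces strong completeness of $z_1$ to the region it separates, which we would try to combine with $z_1 \in N_G(z_0)$ and the light closed neighbourhoods from \zcref{clm:WMA-lightclosedneighborhoods} to reach a contradiction.

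The main obstacle is precisely this step: showing that the second round cannot fail without a $P_6$ or a $K_{2,t}$, rather than recursing indefinitely. The recursion would add a further set at each stage and destroy the bound $g(t)$. My first attempt would be to exploit that $z_0 z_1$ is an edge with $z_0$ complete to the whole pyramid. An induced path $x_i\, a\, x_j$-type start, continuing through $z_0, z_1$ and into $C''$, should give a $P_6$ unless $z_1$ is also complete to $G'$. But $z_1$ cannot have neighbours in $G'$, since $z_1 \in C$ and $C$ is a different component of $G - Z$. So the path $y_1\, x_1\, \ldots$ would yield the contradiction, showing that the second separator must indeed be balanced.
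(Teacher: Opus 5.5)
Your reduction for the case $\alpha(Z_1)\le\alpha(Z)$ matches the paper's. You show $N_G(C')\subseteq Z\cup Z_1$, bound $\alpha(Z^\ast)\le g(t)$, test $N_G[z_1]\cup Z^\ast$, and note that a heavy component $C''$ of its complement must lie in $C'$. The gap is the decisive step, which you flag yourself as the main obstacle. You never produce a valid induced $P_6$, and the path you suggest cannot be induced. By \zcref{clm:WMA-Z0completetopyramid}, $z_0$ is adjacent to every vertex of $G'$. So an induced path that passes through $z_0$ on its way to $z_1$ contains at most one vertex of $G'$, namely the endpoint next to $z_0$. Starts like $x_i\,a\,x_j\,z_0\ldots$ or $y_1\,x_1\ldots$ are therefore impossible, and ``should give a $P_6$ unless $z_1$ is also complete to $G'$'' does not follow. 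The part of the path on the $C''$ side is also left unspecified. A path cannot step from $z_1$ straight into $C''$, because $N_G[z_1]$ is disjoint from $C''$.

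The missing idea is to get the length from the other end, one level deeper.
\begin{itemize}
\item Every neighbour of $C''$ inside $C'$ lies in $N_G(z_1)$, since $C'$ avoids $Z\cup Z_1$. Also $C'$ is connected, and $z_1$ has neighbours in $C'$ but none in $C''$. Hence there is $z_2\in N_G(z_1)\cap V(C')\cap N_G(C'')$.
\item By \zcref{clm:WMA-lightclosedneighborhoods}, $\mathsf{w}(N_G[z_2])\le 1/8<\mathsf{w}(C'')$. So choose $w\in V(C'')\setminus N_G[z_2]$ and an induced $(z_2,w)$-path $Q$ in $G[V(C'')\cup\{z_2\}]$; it has length at least $2$.
\item The path $a\,z_0\,z_1\,z_2\,Q\,w$ is induced. $G'$ is anticomplete to $C\supseteq C'$. $z_0$ is anticomplete to $C'$ because $N_G[z_0]\subseteq S$. $z_1$ is anticomplete to $C''$ because $N_G[z_1]\subseteq N_G[z_1]\cup Z^\ast$.
\end{itemize}
This path has length at least $5$, a contradiction. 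So the second separator is balanced and no recursion occurs; this is exactly the paper's argument.

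As an aside, \zcref{lem:pyramids-exist-2} is not how the paper obtains a pyramid with apex $z_0$; its hypothesis that $N_G(z_0)$ is independent is unavailable. The paper instead applies \zcref{lem:pyramids-exist-1} to $G[\{z_0\}\cup Z_1\cup V(C')]$, with $Z_1$ as a minimal separator.
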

    
    \begin{subproof}
        Suppose~${\alpha(Z_1) \leq  \alpha(Z)}$. 
        Since~${\alpha(Z) \leq 12 (t-1) \leq f(t)}$, we have 
        \[
           \alpha(Z \cup Z_1) \leq \alpha(Z) + \alpha(Z_1) \leq 2\alpha(Z) \leq 2f(t) = g(t)\,.
        \] 
        Consider~${S' \coloneqq N_G[z_1] \cup Z \cup Z_1}$. 
        If~$S'$ is a $(\mathsf{w},\nicefrac{7}{8})$-balanced separator of~$G$, then there is nothing else to show, since ${\alpha(Z \cup Z_1) \leq g(t)}$. 
        So we may assume that there exists a component~$C''$ of~${G - S'}$ with~${\mathsf{w}(C'') > \nicefrac{7}{8}}$. 
        Since ${N_G(C') \cap S \subseteq Z \cup Z_1 \subseteq S'}$ and \linebreak
        ${\mathsf{w}((G - S') - C') \leq \mathsf{w}(G - C') < \nicefrac{1}{8}}$ (as $\mathsf{w}(C') > \nicefrac{7}{8}$), we conclude that~${C'' \subseteq C'}$. 
        Let ${Z_2 \coloneqq N_G(z_1) \cap V(C') \cap N_G(C'')}$. 
        Clearly, ${Z_2 \neq \emptyset}$ since~$G$ is connected. 
        Let~${z_2 \in Z_2}$ be arbitrary. 
        Observe that~${C''\nsubseteq N_G[z_2]}$ as otherwise, hence~${\mathsf{w}(N_G[z_2]) \geq \mathsf{w}(C') > \nicefrac{7}{8}}$, contradicting \zcref{clm:WMA-lightclosedneighborhoods}. 
        Now, let ${w \in C'' \setminus N_G[z_2]}$ and let~$Q$ be an induced $(w,z_2)$-path in~${G[C'' \cup \{z_2\}]}$. 
        The length of~$Q$ is at least~$2$. 
        But then~${a z_0 z_1 z_2 Q w}$ is an induced path of length at least~$5$ in~$G$, contradicting that~$G$ is $P_6$-free. 
    \end{subproof}
    
    Now consider the graph~${G'' \coloneqq G[\{z_0\} \cup Z_1 \cup V(C')]}$. 
    By construction, $Z_1$ is a separator of~$G''$ and both components~$G[\{z_0\}]$ and~$C'$ of~${G'' - Z_1}$ are $Z_1$-full. 
    Hence, $Z_1$ is a minimal separator of~$G''$ and we can apply \zcref{lem:pyramids-exist-1} to find a $t$-pyramid, and hence a pyramid~$\Pi'$ with apex~$z_0$. 

    Now~${N_G(a) \setminus Z \subseteq V(G')}$, and by \zcref{clm:WMA-Z0completetopyramid}, 
    ${N_G(z_0) \supseteq V(G') \cup Z_1}$. 
    Thus, 
    \begin{align*}
        \alpha(N_G(z_0)) 
        \geq \alpha(N_G(a) \setminus Z) + \alpha(Z_1) \geq \alpha(N_G(a)) - \alpha(Z) + \alpha(Z_1) 
        > \alpha(N_G(a))\,,
    \end{align*}
    where the last inequality follows from \zcref{clm:WMA-Z1largealpha}.
    
    But this contradicts the choice of~$\Pi$, where we maximized the independence number of the neighborhood of the apex of the pyramid. 
    Hence, $S = N_G[z_0] \cup Z$ is indeed a $(\mathsf{w},\nicefrac{7}{8})$-balanced separator with~${\alpha(Z) \leq 12 (t-1)}$. 
\end{proof}

\printbibliography

\end{document}